\documentclass{article}

\usepackage{amsthm,amsmath,amssymb}
\usepackage[numbers]{natbib}
\usepackage[colorlinks,citecolor=blue,urlcolor=blue]{hyperref}
\usepackage{mathabx}
\usepackage{stmaryrd}

\newtheorem{prop}{Propositon}[subsection]

\newtheorem{lem}{Lemma}[subsection]

\newtheorem*{theofinal}{Final theorem}
\newtheorem{theo}{Theorem}[subsection]

\newtheorem{cor}{Corollary}
\theoremstyle{remark}
\newtheorem{remark}{Remark}
\newtheorem{example}{Example}
\newtheorem{OpQu}{Open question}
\newtheorem{Suppl}{Supplement}

\newcommand{\mref}[2]{\ref{#1-#2}~(\ref{sec-#2})}

\newcommand{\zzl}{\lgroup}
\newcommand{\zzr}{\rgroup}
\newcommand{\col}[2]{\zzl#1\mid #2\zzr}
\newcommand{\scol}[2]{\left(#1\mid #2\right)}
\newcommand{\add}[2]{\bigcup\zzl #1 \mid #2\zzr}

\DeclareMathOperator{\infraDprod}{infra-\mathcal{D}-prod}
\DeclareMathOperator{\infraEprod}{infra-\mathcal{E}-prod}
\DeclareMathOperator{\infraEalprod}{infra-\mathcal{E}_{\alpha}-prod}
\DeclareMathOperator{\infraDpower}{infra-\mathcal{D}-power}
\DeclareMathOperator{\infraEalpower}{infra-\mathcal{E}_{\alpha}-power}
\DeclareMathOperator{\coz}{coz}
\DeclareMathOperator{\zer}{zer}

\newcommand{\al}{\alpha}
\newcommand{\gm}{\gamma}
\newcommand{\de}{\delta}
\newcommand{\eps}{\varepsilon}
\newcommand{\kap}{\varkappa}
\newcommand{\lm}{\lambda}
\newcommand{\si}{\sigma}
\newcommand{\vphi}{\varphi}
\newcommand{\om}{\omega}

\newcommand{\cD}{\mathcal{D}}
\newcommand{\cE}{\mathcal{E}}
\newcommand{\cP}{\mathcal{P}}

\newcommand{\Nbb}{\mathbb{N}}
\newcommand{\Zbb}{\mathbb{Z}}
\newcommand{\Qbb}{\mathbb{Q}}
\newcommand{\Rbb}{\mathbb{R}}

\newcommand{\hR}{\widehat{\mathbb{R}}}
\newcommand{\hS}{S}
\newcommand{\hRe}{R}

\newcommand{\impl}{\Rightarrow}
\newcommand{\iimpl}{\Leftrightarrow}
\newcommand{\razn}{\setminus}
\newcommand{\eq}{\equiv}
\newcommand{\vrn}{\varnothing}

\newcommand{\detau}{\mathrel{\delta_\tau}}
\newcommand{\depi}{\mathrel{\delta_\pi}}
\newcommand{\dee}{\mathrel{\delta}}
\newcommand{\dest}{\mathrel{\delta^{st}}}
\newcommand{\dekap}{\mathrel{\delta_\varkappa}}
\newcommand{\derho}{\mathrel{\delta_\rho}}
\newcommand{\delm}{\mathrel{\delta_\lambda}}
\newcommand{\epstau}{\mathrel{\varepsilon_\tau}}
\newcommand{\epst}{\mathrel{\varepsilon^{st}}}
\newcommand{\epskap}{\mathrel{\varepsilon_\varkappa}}
\newcommand{\epsrho}{\mathrel{\varepsilon_\rho}}
\newcommand{\epslm}{\mathrel{\varepsilon_\lambda}}
\newcommand{\inn}{\inplus}

\begin{document}

\begin{center}

\textbf{\large Infrafiltration theorem and some closed inductive sequence of models of generalized second-order Dedekind theory of real numbers with exponentially increasing powers}\\[1mm]

\textbf{Valeriy K. Zakharov}\footnote{valeriy\_zakharov@list.ru; Faculty of Mathematics and Mechanics, Lomonosov Moscow State University, Moscow, Russia},
\textbf{Timofey V. Rodionov}\footnote{t.v.rodionov@gmail.com; Faculty of Mathematics and Mechanics, Lomonosov Moscow State University, Moscow, Russia}

\end{center}

\begin{abstract}
The paper is devoted to construction of some closed inductive sequence of models of the generalized second-order Dedekind theory of real numbers with exponentially increasing powers. These models are not isomorphic whereas all models of the standard second-order Dedekind theory are. 
The main idea in passing to generalized models is to consider instead of superstructures with the single common set-theoretical equality and the single common set-theoretical belonging superstructures with several generalized equalities and several generalized belongings for first and second orders.
The basic tools for the presented construction are the infraproduct of collection of mathematical systems different from the factorized Lo\'s ultraproduct and the corresponding generalized infrafiltration theorem. As its auxiliary corollary we obtain the generalized compactness theorem for the generalized second-order language.\\

\emph{Keywords}: second-order language, generalized models, infraproduct, ultraproduct, non-standard analysis\\

\emph{MSC 2010}: 03C85 03C20 26E35 03C98 11U07 11U09 03H05
\end{abstract}

\section{Introduction}

It is well known that all \textit{standard} models of the standard second-order Dedekind theory of real numbers are isomorphic (see, for example, \cite[7.2]{Fef1963}). The paper is devoted to the exposition of some \textit{generalized} second-order Dedekind theory of real numbers with non-isomorphic generalized models.

More precisely, the paper is devoted to construction of some closed inductive sequence $\hRe_i$ ($1\le i\le\omega_0$) of models of the generalized second-order Dedekind theory of real numbers with exponentially increasing powers. 
The models $\hRe_i$ ($0\le i<\omega_0$) are embedded in each other as submodels and at the same time they all are embedded in the \emph{limit-closer model}~$\hRe_{\omega_0}$ as extending submodels.
These generalized models are completely different from mathematical systems presented in~\cite[2.14]{Mendelson1997} under the name of \emph{non-standard analysis}.

The main idea in passing to generalized models is to consider the \emph{generalized second-order language~$L(\Sigma_2^g)$} of some \emph{generalized signature~$\Sigma_2^g$} containing, in addition to individual and predicative constants and variables, some symbols~$\detau$ of \emph{generalized equalities} and some symbols~$\epstau$ of \emph{generalized belongings} for first-order types~$\tau$ and second-order types~$\tau\eq[\tau_0,\ldots,\tau_k]$.

Correspondingly, in the capacity of initial formulas of the language $L(\Sigma_2^g)$ the formulas of the following two forms are taken: the formula $y^\sigma\mathrel{\delta_\sigma} z^\sigma$ and the formula $(x_0^{\tau_0},\ldots,x_k^{\tau_k})\epstau u^\tau$, where~$y^\sigma$ and~$z^\sigma$ are the variables of the first- or the second-order type~$\sigma$ and~$x_i^{\tau_i}$ and $u^\tau$ are the variables of the first-order types~$\tau_i$ and the second-order type~$\tau\equiv[\tau_0,\ldots,\tau_k]$, respectively.

These atomic formulas are interpreted on an evaluated system $\zzl \zzl A,S_2^g\zzr,\gamma\zzr$ (with a superstructure~$S_2^g$ of the signature~$\Sigma_2^g$ over a support~$A$ and an evaluation~$\gamma$ on the system $U\equiv\zzl A,S_2^g\zzr$) in the following generalized way: $\gamma(y^{\si})\approx_\sigma\gamma(z^{\si})$ and $(\gamma(x_0^{\tau_0}),\ldots,\gamma(x_k^{\tau_k}))\inn_\tau\gamma(u^{\tau})$, where $\approx_\sigma$ is a \emph{generalized ratio of equality} and $\inn_\tau$ is a \emph{generalized ratio of belonging}. Generalized equalities and generalized belongings are connected with each other by the \emph{initial principle of change of equals} (see axiom E4 from~\ref{sec-tftsf-signform}).

With respect to the signature $\Sigma_2^g$ formulas $\varphi$ in the language $L(\Sigma_2^g)$ are defined by common induction, when we start from the above-mentioned atomic formulas.

To give a semantics of the language $L(\Sigma_2^g)$ a \emph{satisfaction of a formula $\varphi$  on the system~$U$ with respect to the evaluation of variables~$\gamma$} is defined according to the above-mentioned generalized interpretation of the atomic formulas.

The semantics for the language $L(\Sigma_2^g)$ differs both from the standard semantics (see \cite[Appendix]{Mendelson1997}, \cite[\S 16]{Takeuti2013}) and from the Henkin semantics (see \cite[Appendix]{Mendelson1997}, \cite[\S 21]{Takeuti2013}, \cite[\textbf{4}]{Dalen1997}, and \cite{Rossberg2004,Shapiro1991,Vaananen2001}), which restricts the range of values of the evaluation $\gamma(x^\tau)$ for a variable $x^\tau$ of a second-order type $\tau$ by some subset of the set $\mathcal{P}(\tau(A))$ of the \emph{terminal~$\tau(A)$}.

The general material about second-order notions mentioned above is presented in Sections~\ref{sec-tftsf} and~\ref{sec-systSigma} of the paper. More specific material about the \emph{generalized second-order Dedekind theory of real numbers~$Th^g_{R2}$} and about the canonical model 
$\hRe_0\eq R_2^g\eq\zzl\Rbb,S_{R2}\zzr$ is presented in Section~\ref{sec-Ded}. 

In Section \ref{sec-main} we construct some inductive sequence of non-canonical models $\hRe_i\eq\zzl\hR_i,\hS_i\zzr$, $1\le i\le\omega_0$, with exponentially increasing powers. The basic tool for construction of these systems is the \emph{infraproduct of collection of systems of the signature~$\Sigma_2^g$}, different from the factorized ultraproduct \`a la Lo\'s. To prove that the systems~$\hRe_i$ are models for~$Th^g_{R2}$ we use the simplified variant of the generalized infrafiltration theorem for the generalized second-order language~$L(\Sigma_2^g)$ presented in~\cite{Zakharov2008comp,ZakhYash2014}. Note that the corresponding proof of the infrafiltration property for the standard second-order language~$L(\Sigma_2^{st})$ do not ``pass''. 

\medskip

Further, to shorten the writings we use for the designation of a symbol-string~$\rho$ by a symbol-string~$\si$ the symbol-strings $\si\eq\rho$ or $\rho\eq\si$ ($\si$ is a \emph{designation for~$\rho$}).

\section{The type theory in the language of the signature with generalized equalities and belongings}\label{sec-tftsf}

\subsection{Types}\label{sec-tftsf-types}
\mbox{}

Fix the canonical set $\omega_0$ of all natural numbers and its subset $\Nbb\eq\omega_0\razn\{0\}$ constructed in the Neumann\,--\,Bernays\,--\,G\"odel (NBG) or Zermelo\,--\,Fraenkel (ZF) set theories or in the local theory of sets (LTS) (see~\cite{Zakharov2005LTS} and~\cite[1.1, A.2, B.1]{ZakhRodi2018SFM1}). Hereinafter ST denotes any of these set theories.

Define  by induction the \emph{semitypes} and the \emph{types}:
\begin{enumerate}
	\item
	$0$ is the \emph{semitype} and the \emph{type};
	\item
	if $\tau$ is a type, then $\tau$ is the \emph{semitype}:
	\item
	if $\tau$ is a semitype, then $[\tau]$ is the \emph{type};
	\item
	if $\tau_0,\ldots,\tau_k$ are semitypes and $k\ge1$, then $(\tau_0,\ldots,\tau_k)$ is the \emph{semitype}.
\end{enumerate}
This definition is a slight modification of the corresponding definition from~\cite[\S\,20]{Takeuti2013}.

Further, instead of $[(\tau_0,\ldots,\tau_k)]$ we shall write simply $[\tau_0,\ldots,\tau_k]$; then the notation $[\tau_0,\ldots,\tau_k]$ may be used for $k\ge0$.

Semantics of semitypes and types will be explained in the next subsection.

Types $0$ will be called the \emph{first-order type}. If $\tau_0,\ldots,\tau_k$ are first-order types and $k\ge 0$ then $[\tau_0,\ldots,\tau_k]$ will be called the \emph{second-order type}.

For a type $\tau\equiv[\tau_0,\ldots,\tau_k]$ with $k\ge0$ the types $\tau_0,\ldots,\tau_k$ will be called the \emph{parents of the type} $\tau$ and will be denoted by $p_0\tau,\ldots,p_k\tau$, respectively.
Consider the \emph{set $P(\tau)\equiv\{p_0\tau,\ldots,p_k\tau\}$ of all parents of the type~$\tau$}.

For the first-order type $\tau$ put formally $p\tau\equiv\tau$ and $P(\tau)\equiv\{p\tau\}=\{\tau\}$.

With any type  $\tau$ we associate \emph{the semitype $\check{\tau}$  of the type~$\tau$} as follows:
\begin{enumerate}
	\item
	if $\tau$ is the first-order type, then $\check{\tau}\equiv\tau$;
	\item
	if $\tau=[\tau_1]$ and $\tau_1$ is a semitype, then $\check{\tau}\equiv\tau_1$.
\end{enumerate}
In other words, the semitype of a type is obtained by omitting the square brackets.

\subsection{Terminals over set and mappings}\label{sec-tftsf-formterm}
\mbox{}

Define the \emph{terminals $\tau(A)$ of the semitypes $\tau$ over a set~$A$} by induction:
\begin{enumerate}
	\item
	$0(A)\equiv A$;
	\item
	if $\tau$ is a semitype, then $[\tau](A)\equiv\cP(\tau(A))$, where $\cP$ denotes the operation of taking power-set of the intended set;
	\item
	if $\tau_0,\ldots,\tau_k$ are semitypes, $k\ge 1$, then $(\tau_0,\ldots,\tau_k)(A)\equiv\tau_0(A)\times\ldots\times\tau_k(A)$.
\end{enumerate}

Thus, for semitypes $\tau_0,\ldots,\tau_k$ with $k\ge1$, for the type $\tau\equiv[\tau_0,\ldots,\tau_k]$, and for its semitype $\check{\tau}=(\tau_0,\ldots,\tau_k)$ the equalities $\tau(A)=\cP(\tau_0(A)\times\ldots\times\tau_k(A))$ and $\check{\tau}(A)=\tau_0(A)\times\ldots\times\tau_k(A)$ are fulfilled.  

Let $u:A\to B$ be a mapping from the set $A$ to the set $B$. Define the \emph{terminals $\tau^m(u)$ of the semitypes~$\tau$ over the mapping $u:A\to B$} by induction:
\begin{enumerate}
	\item
	$0^m(u)\equiv u:A\to B$;
	\item
	if $\tau$ is a semitype, then $[\tau]^m(u):\cP(\tau(A))\to\cP(\tau(B))$ is the mapping such that $[\tau]^m(u)(P)\eq(\tau^m(u))[P]\eq\{q\in\tau(B)\mid \exists\,p\in P\ (q=\tau^m(u)(p))\}$ for every $P\in\cP(\tau(A))$;
	\item
	if $\tau_0,\ldots,\tau_k$ are semitypes and $k\ge 1$, then
	 $$
	  (\tau_0,\ldots,\tau_k)^m(u):\tau_0(A)\times\ldots\times\tau_k(A)\to\tau_0(B)\times\ldots\times\tau_k(B)
	 $$
	   is the mapping such that
	$$
	 ((\tau_0,\ldots,\tau_k)^m(u))(p_0,\ldots,p_k)\eq(\tau_0^m(p_0),\ldots,\tau_k^m(p_k))
	$$
	for every $(p_0,\ldots,p_k)\in\tau_0(A)\times\ldots\times\tau_k(A)$.
\end{enumerate}

\subsection{The signature with generalized equalities and belongings and its language}\label{sec-tftsf-signform}
\mbox{}

A non-empty set $\Theta$ of types $\tau$ will be called the \emph{type domain} if $\tau\in\Theta$ implies $p\tau\in\Theta$ for every parent $p\tau$ of the type $\tau$. In the type domain $\Theta$ select the \emph{belonging type subdomain} 
$\Theta_b\equiv \{\tau\in\Theta\mid\exists\, k\in\omega_0\ \exists\tau_0,\ldots,\tau_k\in\Theta\ (\tau=[\tau_0,\ldots,\tau_k])\}$.

A collection $\Sigma_c\equiv\col{\Sigma_c^\tau}{\tau\in\Theta}$ of \emph{collections $\Sigma_c^\tau\equiv\col{\sigma_\omega^\tau}{\omega\in\Omega_\tau}$ of constants $\sigma_\omega^\tau$ of the types~$\tau$} will be called the \emph{signature of constants of the type domain~$\Theta$}. Sets $\Omega_\tau$ may be empty, and then $\Sigma_c^\tau=\vrn$.

The constants $\sigma_\omega^0$ of the first-order type $0$ are called \emph{individual} or \emph{objective}. The constants of other types are called \emph{predicate}.

A collection $\Sigma_e\equiv\col{\detau}{\tau\in\Theta}$ of binary \emph{predicate symbols of} (\emph{generalized}) \emph{equalities~$\detau$ of the types~$\tau$} will be called the \emph{signature of} (\emph{generalized}) \emph{equalities of the type domain~$\Theta$}. It follows from the definition of the type domain that for every equality symbol $\detau$ the collection $\Sigma_e$ contains necessarily the equality symbols  $\delta_{p\tau}$ for every parent~$p\tau$ of the type~$\tau$.

A collection $\Sigma_b\equiv\col{\epstau}{\tau\in\Theta_b}$ of binary \emph{predicate symbols of} (\emph{generalized}) \emph{belongings~$\epstau$ of the types~$\tau$} will be called the \emph{signature of} (\emph{generalized}) \emph{belongings of the type domain~$\Theta$}.

A collection $\Sigma_v\equiv\col{\Sigma_v^\tau}{\tau\in\Theta}$ of denumerable  \emph{sets $\Sigma_v^\tau$  of variables $x^\tau$, $y^\tau$,\ldots of the types~$\tau$} will be called the \emph{signature of variables of the type domain~$\Theta$}. The sets $\Sigma_v^\tau$ may be empty. The variables $x^0,y^0,\ldots$ of the first-order type~$0$  are called \emph{individual} or \emph{objective}. The variables of other types are called \emph{predicate}.

Further, we shall always assume that for every type $\tau\in\Theta$ there are either constants or variables of this type.

The quadruple $\Sigma^g\equiv\Sigma_c|\Sigma_e|\Sigma_b|\Sigma_v$ will be called the \emph{generalized signature} or the \emph{signature with generalized equalities and belongings}.

The \emph{language $L(\Sigma^g)$ of the generalized signature} $\Sigma^g$ consists of:
\begin{enumerate}
	\item
	all types $\tau$ from the type domain $\Theta$;
	\item
	all members of all signatures from $\Sigma^g$;
	\item
	the logical symbols $\lnot$, $\lor$, $\land$, $\Rightarrow$, $\forall$, and $\exists$;
	\item
	parenthesis.
\end{enumerate}

If the type domain $\Theta$ contains first- and second-order types only and at least one second-order type, then we shall say that the signature $\Sigma^g$ and the language $L(\Sigma^g)$ have the \emph{second order} (see~\cite[Appendix]{Mendelson1997}, \cite[\textbf{4}]{Dalen1997}). In this case the notations $\Sigma_2^g$ and $L(\Sigma_2^g)$ will be used.

\subsection{Terms, formulas, and the type theory for the language of the generalized signature}
\mbox{}

Constants and variables of a type $\tau$ are called \emph{terms of the type $\tau$ of the language $L(\Sigma^g)$}.

The \emph{atomic formulas of the language}  $L(\Sigma^g)$ are defined in the following way:
\begin{enumerate}
	\item
	if $q$ and $r$ are terms of a type $\tau\in\Theta$, then $q\detau r$ is an \emph{atomic formula};
	\item
	if $\tau_0$,\ldots, $\tau_k$ are types from $\Theta$ for $k\ge 0$, $\tau\equiv[\tau_0,\ldots,\tau_k]\in\Theta_b$, $q_0^{\tau_0}$, \ldots, $q_k^{\tau_k}$ are terms of the types $\tau_0$, \ldots, $\tau_k$, respectively, and $r^\tau$ is a term of the type~$\tau$, then $(q_0^{\tau_0},\ldots,q_k^{\tau_k})\epstau r^\tau$ is the \emph{atomic formula};
	in particular, for $k=0$ the symbol-string  $q_0^{\tau_0}\mathrel{\eps_{[\tau_0]}} r^{[\tau_0]}$ is the \emph{atomic formula}.
\end{enumerate}

The \emph{formulas of the language} $L(\Sigma^g)$ are constructed from atomic ones with the use of connectives $\lor$, $\land$, $\lnot$, $\Rightarrow$, quantifiers $\exists x^\tau$ and $\forall x^\tau$ with respect to the variables $x^\tau$, and parenthesis.

The \emph{logical axiom schemes of \emph{the} type theory in the language $L(\Sigma^g)$ of the generalized signature $\Sigma^g$} are the schemes of the predicate calculus, where variables and terms substituting each other must be of the same type $\tau\in\Theta$.

In addition to these axiom schemes, consider the following \emph{equality axioms for the types $\tau\in\Theta$}.

\textbf{E1.} $\forall\,x^\tau\ (x\detau x)$.

\textbf{E2.} $\forall\,x^\tau,y^\tau\ (x\detau y\Rightarrow y\detau x)$.

\textbf{E3.} $\forall\, x^\tau,y^\tau,z^\tau\ (x\detau y\land y\detau z\Rightarrow x\detau z)$.

\textbf{E4.} (The \emph{initial principle of change of equals}.) 
\begin{multline*}
\forall\, x_0^{\tau_0},y_0^{\tau_0},\ldots,x_k^{\tau_k},y_k^{\tau_k},u^\tau,v^\tau\ \bigl(x_0\mathrel{\delta_{\tau_0}}y_0\land\ldots\land x_k\mathrel{\delta_{\tau_k}}y_k\land u\detau v\Rightarrow\\
\Rightarrow((x_0,\ldots,x_k)\epstau u\Leftrightarrow(y_0,\ldots,y_k)\eps_\tau v))\bigr), \text{ where } \tau\equiv[\tau_0,\ldots,\tau_k].
\end{multline*}

The \emph{inference rules} in the depicted type theory are:

$$\frac{\varphi,\,\varphi\Rightarrow\psi}{\psi}\ (MP)\quad\mbox{ and }\quad\frac{\varphi(x^\tau)}{\forall\, x^\tau\ \varphi(x^\tau)}\quad(Gen).$$

If there are non-logical axioms or axiom schemes written by second-order formulas of the language $L(\Sigma^g_2)$, then we shall say that a (\emph{mathematical}) \emph{generalized second-order theory} is given.

\section{Mathematical systems of the signature $\Sigma^g$ with generalized equalities and belongings}\label{sec-systSigma}

\subsection{The definition of mathematical systems and their homomorphisms of the generalized signature $\Sigma^g$}\label{sec-systSigma-defsystSigma}
\mbox{}

\paragraph*{Generalized systems.}
Let $\Sigma^g$ be a fixed signature defined in~\ref{sec-tftsf-signform}. Fix also a set~$A$.
For the set~$A$ and the signature $\Sigma^g$ consider the following collections:
\begin{enumerate}
	\item
	 $S_c\equiv\col{S_c^\tau}{\tau\in\Theta}$ of collections $S_c^\tau\equiv\col{s_\omega^\tau}{\omega\in\Omega_\tau}$ of \emph{constant structures} $s_\omega^\tau\in\tau(A)$ \emph{of the types}~$\tau$;
	\item
	 $S_e\equiv\col{\approx_\tau}{\tau\in\Theta}$ of \emph{generalized ratios of equality} $\approx_\tau\subset\tau(A)\times\tau(A)$ \emph{of the types~$\tau$ on the sets}~$\tau(A)$, containing the usual set-theoretic ratios of equality~$=$ on the sets~$\tau(X)$, i.\,e., such ratios $\approx_\tau$ that for every elements $r,s\in\tau(A)$ the equality $r=s$ implies the generalized equality $r\approx_\tau s$;
	\item
	 $S_b\equiv\col{\inn_\tau}{\tau\in\Theta_b}$ of \emph{generalized ratios of belonging $\inn_\tau\subset\check{\tau}(A)\times\tau(A)$ of the types}~$\tau$, containing the usual set-theoretic ratios of belonging~$\in$ from the sets~$\check{\tau}(X)$ into the sets~$\tau(X)$, i.\,e., such ratios~$\inn_\tau$ that for every elements $p\in\check{\tau}(A)$ and $P\in\tau(A)$ the belonging $p\in P$ implies the generalized belonging $p\inn_\tau P$;
	\item
	 $S_v\equiv\col{\tau(A)}{\tau\in\Theta}$ of the terminals $\tau(A)$ of the types~$\tau$ over the set~$A$.
\end{enumerate}

The quadruple $S\equiv\zzl S_c,S_e,S_b,S_v\zzr$ of the above-mentioned collections will be called a \emph{superstructure of the signature~$\Sigma^g$ over the set~$A$}.

The pair $U\equiv\zzl A,S\zzr$ will be called a \emph{mathematical system of the generalized signature~$\Sigma^g$ with the support \textup{(\emph{carrier})}~$A$ and the superstructure~$S$.} This notion is a generalization of the notion of an \emph{algebraic system of the signature}~$\Sigma_1$ (see~\cite[\S\,15]{ErshPal1984}).

The mathematical system $U\equiv\zzl A,S\zzr$ will be called also an \emph{interpretation of the signature~$\Sigma^g$ on the support~$A$}.

Further, for a type $\tau=[\tau_0,\ldots,\tau_k]$ and elements $p\equiv(p(0),\ldots,p(k))$, $q\equiv(q(0),\ldots,q(k))\in\check{\tau}(A)=\tau_0(A)\times\ldots\times\tau_k(A)$ along with 
$$
 p(0)\approx_{\tau_0}q(0)\land\ldots\land p(k)\approx_{\tau_k}q(k)
$$
 we shall also write $p\approx_{\check{\tau}}q$.

The generalized equalities $\approx_\tau$ and the generalized belongings $\inn_\tau$ admit some additional conditions.

A system $U$ will be called \emph{balanced} if 
$$
 \forall\,P,Q\in\tau(A)\ (P\approx_\tau Q\Leftrightarrow\forall\, p\in P\ \exists\, q\in Q\ (q\approx_{\check{\tau}}p)\land\forall\, q\in Q\ \exists\, p\in P\ (p\approx_{\check{\tau}}q)),
$$
 where $\tau_0,\ldots,\tau_k\in\Theta$, $k\ge 0$ and $\tau\equiv[\tau_0,\ldots,\tau_k]\in\Theta$.
A system $U$ will be called \emph{regular} if
$
 \forall\, p\in\check{\tau}(A)\ \forall\, P\in\tau(A)\ (p\inn_\tau P\Leftrightarrow \exists\, q\in P\ (p\approx_{\check{\tau}} q)),
$
  where $\tau_0,\ldots,\tau_k\in\Theta$, $k\ge 0$, and $\tau\equiv[\tau_0,\ldots,\tau_k]\in\Theta$.
A system $U$ will be called \emph{extensional} if
$$
 \forall\, P,Q\in\tau(A)\ (P\approx_\tau Q\Leftrightarrow\forall\, p\ (p\inn_\tau P\Rightarrow p\inn_\tau Q)\land\forall\, q\ (q\inn_\tau Q\Rightarrow q\inn_\tau P)),
$$
where $\tau\in\Theta_b$.

\paragraph*{Generalized homomorphisms.}
Let $U\eq\zzl A,S\zzr$ and $V\eq\zzl B,T\zzr$ be systems of the signature~$\Sigma^g$ from~\ref{sec-systSigma-defsystSigma}. A mapping $u:A\to B$ in the considered set theory ST from the set~$A$ to the set~$B$ is called a \emph{homomorphism of the signature~$\Sigma^g$ from the system~$U$ into the system~$V$} if for every type $\tau\in\Theta$, every index $\omega\in\Omega_{\tau}$, every corresponding constant structure $s_{\omega}^{\tau}\in\tau(A)$ of the collection~$S_c$, and every corresponding constant structure $t_{\omega}^{\tau}\in\tau(B)$ of the collection~$T_c$ the following properties are fulfilled:
\begin{enumerate}
	\item if $\tau=0$, then $\tau^m(u)(s_{\omega}^{\tau})=u(s_{\omega}^{\tau})=t_{\omega}^{\tau}$;
	\item if $\tau\in\Theta_b$, then every generalized belonging $p\inn_{\tau,A} s_{\omega}^{\tau}$ implies the corresponding generalized belonging $\check{\tau}^m(u)(p)\inn_{\tau,B} t_{\omega}^{\tau}$ for every $p\in\check{\tau}(A)$.
\end{enumerate}

\subsection{Evaluations and models}\label{sec-systSigma-eval}
\mbox{}

An \emph{evaluation on a system $U\equiv\zzl A,S\zzr$ of the signature}~$\Sigma^g$ is a mapping~$\gamma$ defined on the set of all variables of the signature~$\Sigma^g$ and associating with the variable $x^\tau$ of the type $\tau\in\Theta$ the element
$\gamma(x^\tau)$ of the terminal~$\tau(X)$ (see \cite[\S\,16]{ErshPal1984}, \cite[16.17]{Takeuti2013}).
The pair $\zzl U,\gamma\zzr$ consisting of the system~$U$ of the signature~$\Sigma^g$ and the evaluation~$\gamma$ on~$U$ will be called an \emph{evaluated mathematical system of the signature}~$\Sigma^g$.

Define the \emph{value $q[\gamma]$ of a term $q$ with respect to the evaluation~$\gamma$ on the system}~$U$ in the following way (see \cite[\S\,16]{ErshPal1984}, \cite[\S\,6]{Maltsev1973}, \cite[2.2]{Mendelson1997}, \cite[2.5]{Shoenfield2001}):
for a constant $\sigma^\tau_{\omega}$  of a type $\tau\in\Theta$ put  $\sigma^\tau_{\omega}[\gamma]\equiv s^\tau_{\omega}$ and for a variable  $x^\tau$ of a type $\tau\in\Theta$ put $x^\tau[\gamma]\equiv\gamma(x^\tau)$.

Define the \emph{satisfaction \emph{(\emph{translation})} of a formula $\varphi$ of the language~$L(\Sigma_2^g)$ on a system~$U$ of the signature~$\Sigma_2^g$ with respect to an evaluation~$\gamma$} (in notation, $U\vDash\varphi[\gamma]$) by induction in the following way (see \cite[2.2]{Mendelson1997}, \cite[2.5]{Shoenfield2001}, \cite[16.17]{Takeuti2013}, \cite[A.1.3]{ZakhRodi2018SFM1}):
\begin{enumerate}
	\item
	if $q$ and $r$ are terms of a type $\tau\in\Theta$ and $\varphi\equiv (q\detau r)$, then $U\vDash \varphi[\gamma]$ is equivalent to $q[\gamma]\approx_\tau r[\gamma]$;
	\item
	if $\tau_0,\ldots,\tau_k$ are types from $\Theta$ for $k\ge0$, $\tau\equiv[\tau_0,\ldots,\tau_k]\in\Theta$, $q_0,\ldots,q_k$ are terms of the types $\tau_0$, \ldots,$\tau_k$, respectively, $r$ is a term of the type $\tau$, and $\varphi\equiv(q_0,\ldots,q_k)\epstau r$, then $U\vDash\varphi[\gamma]$ iff $(q_0[\gamma],\ldots,q_k[\gamma])\inn_\tau r[\gamma]$;
	\item
	if $\varphi\equiv\lnot\psi$, then $U\vDash\varphi[\gamma]$ iff $U\vDash\psi[\gamma]$ is not true;
	\item
	if $\varphi\equiv(\psi\lor\xi)$, then $U\vDash\varphi[\gamma]$ iff $U\vDash\psi[\gamma]$ or $U\vDash\xi[\gamma]$;
	\item
	if $\varphi\equiv(\psi\land\xi)$, then $U\vDash\varphi[\gamma]$ iff $U\vDash\psi[\gamma]$ and $U\vDash\xi[\gamma]$;
	\item
	if $\varphi\equiv(\psi\Rightarrow\xi)$, then $U\vDash\varphi[\gamma]$ iff that $U\vDash\psi[\gamma]$ implies $U\vDash\xi[\gamma]$;
	\item
	if $\varphi\equiv\exists\, x^\tau\psi$, then $U\vDash\varphi[\gamma]$ is equivalent to $U\vDash\psi[\gamma']$ for some  evaluation $\gamma'$ such that $\gamma'(y^\sigma)=\gamma(y^\sigma)$ for every variable $y^\sigma\ne x^\tau$;
	\item
	if $\varphi\equiv\forall\, x^\tau\psi$, then $U\vDash\varphi[\gamma]$ is equivalent to $U\vDash\psi[\gamma']$ for every  evaluation $\gamma'$ such that $\gamma'(y^\sigma)=\gamma(y^\sigma)$ for every variable $y^\sigma\ne x^\tau$.
\end{enumerate}

Let $\Phi$ be a set of formulas of the language $L(\Sigma_2^g)$. An evaluated mathematical system $\zzl U,\gamma\zzr$ of the signature~$\Sigma_2^g$ will be called an (\emph{evaluated}) \emph{model for the set}~$\Phi$ if $U\vDash\varphi[\gamma]$ for every formula $\varphi\in\Phi$ (see \cite[\S\,17]{ErshPal1984}). A mathematical system~$U$ of the signature~$\Sigma_2^g$ will be called a \emph{model for the set}~$\Phi$ if an evaluated mathematical system $\zzl U,\gamma\zzr$ is a model for the set~$\Phi$ for every evaluation~$\gamma$ on~$U$.

A model $\zzl U,\gamma\zzr$ will be called \emph{balanced}, \emph{regular}, \emph{extensional}, etc. if the system~$U$ is the same.

A model $\zzl U,\gamma\zzr$ for a set $\Phi$ will be called \emph{second-order} if at least one formula from~$\Phi$ contains at least one second-order variable.

Remark that if a system $U\equiv\zzl A,S\zzr$ is considered in an axiomatic set theory, then the satisfaction of a closed formula~$\varphi$ of the language $L(\Sigma_2^g)$ with respect to any evaluation~$\gamma$ is reduced to correctness of the relativization~$\varphi^r$ of $\varphi$ on the corresponding terminals of the support~$A$ in this set theory.
Here the correctness of~$\vphi^r$ means that~$\vphi^r$ is a deducible formula in this axiomatic set theory.

Thus, if $\Phi$ consists of closed formulas only, then~$U$ is a model for~$\Phi$ iff $\zzl U,\gamma\zzr$ is a model for~$\Phi$ for some (and, consequently, for any) evaluation~$\gm$.

In particular, since equality axioms E1--E4 are closed formulas, their relativizations E1$^r$--E4$^r$ take the following forms:
\begin{align*}
E1^r &\eq\ \forall\, x\in\tau(A)\ (x\approx_\tau x);\\
E2^r &\eq\ \forall\, x,y\in\tau(A)\ (x\approx_\tau y\Rightarrow y\approx_\tau x);\\
E3^r &\eq\ \forall\, x,y,z\in\tau(A)\ (x\approx_\tau y\land y\approx_\tau z\Rightarrow x\approx_\tau z);\\
E4^r &\eq\ \forall\, x_0,y_0\in\tau_0(A)\ldots\forall\, x_k,y_k\in\tau_k(A)\ \forall u,v\in\tau(A)\ (x_0\approx_{\tau_0}y_0\land\ldots\land\\
     &\mbox{}\qquad\land x_k\approx_{\tau_k}y_k\land u\approx_\tau v \Rightarrow((x_0,\ldots,x_k)\inn_\tau u\Leftrightarrow (y_0,\ldots,y_k)\inn_\tau v)),\\
&\text{ where } \tau\equiv[\tau_0,\ldots,\tau_k],\ k\ge 0,
\text{ and all types are in } \Theta.
\end{align*}

The satisfaction of formulas E1$^r$--E3$^r$ means that all generalized equalities $\approx_\tau$ are equivalence relations on corresponding sets~$\tau(A)$, and the satisfaction of formula~E4$^r$ means the initial principle of change of equals in the atomic formula with the generalized belonging $\inn_\tau$.

Further on, we shall say that a system $U$ of the signature~$\Sigma_2^g$ has \emph{true generalized equalities and belongings} if axioms E1--E4 from~\ref{sec-tftsf-signform} are satisfied on~$U$ with respect to some (and, consequently, to any) evaluation~$\gamma$. This means that formulas E1$^r$--E4$^r$ are correct for the system~$U$ in the used set theory.

\subsection{The generalized equality of values of evaluations and satisfiability}\label{sec-systSigma-satisf}
\mbox{}

For every formula $\varphi$ of the language $L(\Sigma^g_2)$ we define the formula $\varphi^*$ by induction:
\begin{enumerate}
	\item 
	$\varphi^*\equiv\varphi$ for every atomic formula $\varphi$;
	\item
	$(\psi\land\xi)^*\equiv \psi^*\land\xi^*$;
	\item
	$(\lnot\psi)^*\equiv\lnot\psi^*$;
	\item
	$(\exists x^\tau\psi)^*\equiv\exists x^\tau\psi^*$;
	\item
	$(\psi\lor\xi)^*\equiv\lnot(\lnot\psi^*\land\lnot\xi^*)$;
	\item
	$(\psi\Rightarrow\xi)^*\equiv\lnot(\psi^*\land\lnot\xi^*)$;
	\item
	$(\forall x^\tau\psi)^*\equiv \lnot(\exists x^\tau(\lnot\psi^*))$.
\end{enumerate}

A formula $\varphi$ is said to be \emph{normalizable} if for every mathematical $\Sigma_2^g$-system~$U$ and every  evaluation~$\gamma$ on~$U$ the following condition holds: $U\vDash\varphi[\gamma]\Leftrightarrow U\vDash\varphi^*[\gamma]$.

\begin{lem}\label{lem-1-systSigma-satisf}
	Let formulas $\psi$ and $\xi$ be normalizable. Then formulas $\psi\land\xi$,  $\lnot\psi$, $\psi\lor\xi$, $\psi\Rightarrow\xi$, $\forall x^\tau\psi$, and $\exists x^\tau\psi$ are normalizable as well.
\end{lem}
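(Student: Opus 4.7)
The plan is to dispatch the six cases independently, in each one unfolding satisfaction per the corresponding clause from Section~\ref{sec-systSigma-eval}, applying the hypothesis that $\psi$ and $\xi$ are normalizable, and then refolding through the satisfaction clauses on the starred side. For the three constructors $\land$, $\lnot$, $\exists$, the starred version preserves the logical shape, so the argument is a one-line chain of equivalences; for $\lor$, $\Rightarrow$, $\forall$, the starred version trades the primitive in for a $\lnot/\land/\lnot$ (respectively $\lnot/\exists/\lnot$) combination, so one additionally needs a classical tautology in the metalanguage.

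For the $\land$ case the chain reads
\[
U\vDash(\psi\land\xi)[\gamma] \Leftrightarrow U\vDash\psi[\gamma]\ \text{and}\ U\vDash\xi[\gamma] \Leftrightarrow U\vDash\psi^*[\gamma]\ \text{and}\ U\vDash\xi^*[\gamma] \Leftrightarrow U\vDash(\psi^*\land\xi^*)[\gamma],
\]
the first and third equivalences by the satisfaction clause for $\land$, the middle one by the normalizability of $\psi$ and $\xi$; and the right-hand side coincides with $U\vDash(\psi\land\xi)^*[\gamma]$ by the definition of $(\cdot)^*$. The cases $\lnot\psi$ and $\exists x^\tau\psi$ are handled the same way, the latter applying normalizability under the metatheoretic existential quantifier ranging over the $x^\tau$-variants $\gamma'$ of $\gamma$.

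For the $\lor$ case the plan is to begin identically and arrive at ``$U\vDash\psi^*[\gamma]$ or $U\vDash\xi^*[\gamma]$'', then invoke the classical tautology $P\lor Q \Leftrightarrow \lnot(\lnot P\land\lnot Q)$ in the metatheory, and refold via the clauses for $\lnot$ and $\land$ to obtain $U\vDash\lnot(\lnot\psi^*\land\lnot\xi^*)[\gamma]$, which is $U\vDash(\psi\lor\xi)^*[\gamma]$. The $\Rightarrow$ case is identical but uses $(P\Rightarrow Q)\Leftrightarrow\lnot(P\land\lnot Q)$. For $\forall x^\tau\psi$, after unfolding to ``for every $x^\tau$-variant $\gamma'$ of $\gamma$, $U\vDash\psi[\gamma']$'' and applying normalizability under the quantifier, one uses $\forall\gamma'\,P(\gamma')\Leftrightarrow\lnot\exists\gamma'\,\lnot P(\gamma')$ and refolds using the clauses for $\exists$ and $\lnot$.

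There is no real obstacle here: normalizability is precisely the property that decouples the satisfaction semantics of $\varphi$ from any fixed outermost syntactic shape, so the proof is a routine case check. The only thing worth flagging is that the metalanguage must be classical for the $\lor$, $\Rightarrow$, and $\forall$ cases, which is fine since ST is taken to be NBG, ZF, or LTS. Accordingly, in a full write-up one would give the $\land$, $\lor$, and $\forall$ cases in detail and remark that $\lnot$, $\exists$, and $\Rightarrow$ go through analogously.
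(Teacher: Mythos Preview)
Your proposal is correct and matches the paper's approach: the paper omits the proof entirely, remarking only that it ``uses the definition of satisfiability and some well known tautologies only.'' Your case-by-case unfolding via the satisfaction clauses together with the classical metatheoretic tautologies for $\lor$, $\Rightarrow$, and $\forall$ is precisely what is intended.
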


The proof of this lemma uses the definition of satisfiability and some well known tautologies only, so it is omitted.

\begin{prop}\label{prop-1-systSigma-satisf}
	Every formula of the language $L(\Sigma_2^g)$ of the generalized second-order signature $\Sigma_2^g$ is normalizable.
\end{prop}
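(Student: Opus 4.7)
The plan is a straightforward structural induction on the complexity of formulas of $L(\Sigma_2^g)$, using the preceding Lemma \ref{lem-1-systSigma-satisf} for the inductive step.

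For the base case, let $\varphi$ be an atomic formula (either of the form $q \detau r$ or $(q_0^{\tau_0},\ldots,q_k^{\tau_k}) \epstau r^\tau$). By clause~1 of the definition of $\varphi^*$, we have $\varphi^* \equiv \varphi$, so trivially $U \vDash \varphi[\gamma] \Leftrightarrow U \vDash \varphi^*[\gamma]$ for every $\Sigma_2^g$-system $U$ and every evaluation $\gamma$. Hence every atomic formula is normalizable.

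For the inductive step, assume the claim holds for all strict subformulas of a given formula $\varphi$. The formula $\varphi$ is obtained from strictly simpler (hence, by the induction hypothesis, normalizable) formulas by one of the constructions $\lnot$, $\land$, $\lor$, $\Rightarrow$, $\exists x^\tau$, or $\forall x^\tau$. In each of these six cases, Lemma~\ref{lem-1-systSigma-satisf} delivers the normalizability of~$\varphi$ at once. This closes the induction and proves the proposition.

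There is no genuine obstacle here: the substantive work is absorbed into Lemma~\ref{lem-1-systSigma-satisf}, whose proof (as noted just before the proposition) rests on propositional tautologies such as $\psi \lor \xi \Leftrightarrow \lnot(\lnot\psi \land \lnot\xi)$, $\psi \Rightarrow \xi \Leftrightarrow \lnot(\psi \land \lnot\xi)$, and $\forall x^\tau \psi \Leftrightarrow \lnot\exists x^\tau(\lnot\psi)$, together with the recursive definition of the satisfaction relation $U \vDash \cdot [\gamma]$. The only point worth emphasising is that the semantics of the atomic formulas uses generalized ratios $\approx_\sigma$ and $\inn_\tau$ rather than set-theoretic equality and belonging, but this does not affect the argument: the equivalences above are purely Boolean and apply uniformly in~$\gamma$, independently of how the atomic layer is interpreted.
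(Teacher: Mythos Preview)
Your proof is correct and follows essentially the same approach as the paper: both arguments reduce the claim to the atomic base case (where $\varphi^*\equiv\varphi$) and then invoke Lemma~\ref{lem-1-systSigma-satisf} for the inductive step. The only cosmetic difference is that the paper organizes the induction by the number $n$ of logical symbols in~$\varphi$ (complete induction on $n\in\omega_0$), whereas you use structural induction on formulas; these are equivalent here.
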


\begin{proof}
	Denote by $\Phi$ the set of all formulas of the language $L(\Sigma_2^g)$. The subset of the set $\Phi$ consisting of formulas containing at most $n\in\omega_0$ logical symbols $\lnot$, $\land$, $\Rightarrow$, $\lor$, $\exists$, $\forall$, denote by $\Phi_n$. It is clear that $\Phi=\add{\Phi_n}{n\in\omega_0}$.
	
	Prove by the complete induction principle the following assertion $A(n)$: \emph{every formula $\varphi\in\Phi$ is normalizable.}
	
	If $n=0$, then the formula $\varphi$ is atomic, and so by the definition of the operation $\varphi\mapsto\varphi^*$ we have $\varphi^*\equiv \varphi$. Consequently, the assertion $A(0)$ is true.
	
	Suppose that for all $m<n$ the assertion $A(m)$ is true. Let $\varphi\in \Phi_n$. If $\varphi\equiv\psi\land\xi$, $\varphi\equiv\lnot\psi$, $\varphi\equiv\exists x^\tau\psi$, $\varphi\equiv\psi\lor\xi$, $\varphi\equiv\psi\Rightarrow\xi$, or $\varphi\equiv\forall x^\tau\psi$, then $\psi,\xi\in\Phi_{n-1}$. Therefore by the induction hypothesis, the formulas~$\psi$ and~$\xi$ are normalizable. By Lemma~\ref{lem-1-systSigma-satisf} the formula $\varphi$ is normalizable. Hence the assertion $A(n)$ is true.
\end{proof}

\begin{prop}\label{prop-2-systSigma-satisf}
	Let $U$ be a mathematical system of the second-order signature $\Sigma_2^g$ with true generalized equalities and belongings. 
	Then for every formula $\varphi$ of the language $L(\Sigma_2^g)$ and every evaluations~$\gamma$ and~$\delta$ on the system~$U$ such that $\gamma(x^\tau)\approx_\tau\delta(x^\tau)$ for every variable $x^\tau$ of every type $\tau\in\Theta$ the properties $U\vDash\varphi[\gamma]$ and $U\vDash\varphi[\delta]$ are equivalent.
\end{prop}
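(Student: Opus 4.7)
The plan is to invoke Proposition~\ref{prop-1-systSigma-satisf} to replace $\varphi$ by its equivalent normal form $\varphi^*$, which is built only from atomic formulas using $\land$, $\lnot$, and $\exists$, and then to proceed by induction on the number of these three connectives in $\varphi^*$. This reduces the inductive step to only three cases and spares us from treating $\lor$, $\Rightarrow$, and $\forall$ separately.

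For the base case I first record the auxiliary observation that for every term $q$ of every type $\sigma\in\Theta$ one has $q[\gamma]\approx_\sigma q[\delta]$: if $q$ is a constant, then $q[\gamma]=q[\delta]$ is the associated constant structure, which is $\approx_\sigma$-related to itself because $\approx_\sigma$ contains the set-theoretic equality; if $q$ is a variable this is exactly the standing hypothesis. For an atomic formula $\varphi\equiv q\detau r$ the equivalence $q[\gamma]\approx_\tau r[\gamma]\Leftrightarrow q[\delta]\approx_\tau r[\delta]$ then follows from symmetry and transitivity of $\approx_\tau$, that is, from axioms E2$^r$ and E3$^r$ for~$U$. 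For an atomic belonging formula $\varphi\equiv(q_0,\ldots,q_k)\epstau r$ with $\tau\eq[\tau_0,\ldots,\tau_k]$ the same observation gives $q_i[\gamma]\approx_{\tau_i}q_i[\delta]$ componentwise and $r[\gamma]\approx_\tau r[\delta]$, and the required equivalence now follows directly from axiom E4$^r$, the initial principle of change of equals.

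For the inductive step the cases $\varphi\equiv\lnot\psi$ and $\varphi\equiv\psi\land\xi$ are immediate from the induction hypothesis applied to $\psi$ and $\xi$. For $\varphi\equiv\exists x^\tau\psi$, if $U\vDash\varphi[\gamma]$ I would pick a witnessing evaluation $\gamma'$ that agrees with $\gamma$ off $x^\tau$ and satisfies $U\vDash\psi[\gamma']$, and then define $\delta'$ by $\delta'(x^\tau)=\gamma'(x^\tau)$ and $\delta'(y^\sigma)=\delta(y^\sigma)$ for every variable $y^\sigma\ne x^\tau$. One checks that $\gamma'(y^\sigma)\approx_\sigma\delta'(y^\sigma)$ for every variable: at $x^\tau$ the two values coincide, and elsewhere $\gamma'(y^\sigma)=\gamma(y^\sigma)\approx_\sigma\delta(y^\sigma)=\delta'(y^\sigma)$ by the standing hypothesis. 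Applying the induction hypothesis to $\psi$ with the pair $(\gamma',\delta')$ yields $U\vDash\psi[\delta']$, and since $\delta'$ differs from $\delta$ only at $x^\tau$ one concludes $U\vDash\varphi[\delta]$. The converse implication is symmetric.

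The main obstacle is precisely this existential case: the witness has to be transferred from $\gamma'$ to a companion $\delta'$ in such a way that the $\approx_\sigma$-equivalence of evaluations is maintained on every variable simultaneously, which is what forces the choice $\delta'(x^\tau)=\gamma'(x^\tau)$. Once this choice is in place the verification is routine, and Proposition~\ref{prop-1-systSigma-satisf} is what keeps the inductive case analysis confined to the three connectives $\land$, $\lnot$, $\exists$.
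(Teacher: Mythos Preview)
Your proposal is correct and follows essentially the same route as the paper: normalize via Proposition~\ref{prop-1-systSigma-satisf} to a formula built from $\land$, $\lnot$, $\exists$, handle the atomic base case via E2$^r$, E3$^r$, E4$^r$, and in the existential step transfer the witness by setting $\delta'(x^\tau)=\gamma'(x^\tau)$ and $\delta'=\delta$ elsewhere. Your auxiliary observation that $q[\gamma]\approx_\sigma q[\delta]$ for every term $q$ is a slightly cleaner way of organizing the atomic case than the paper's case-by-case treatment, but the argument is the same.
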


\begin{proof}
	The set of all formulas $\varphi$ of the language $L(\Sigma_2^g)$ constructed by induction from the atomic formulas with the use of connectives $\lnot$ and $\land$ and  quantifier $\exists$ denote by $\Psi$. The subset of the set $\Psi$ consisting of formulas containing at most $n\in\omega_0$ logical symbols $\lnot$, $\land$, and $\exists$ denote by $\Psi_n$. It is clear that $\Psi=\add{\Psi_n}{n\in\omega_0}$.
	
	Prove by the complete induction principle the assertion $A(n)$: \emph{for every formula $\varphi\in\Psi_n$ and every mentioned evaluations $\gamma$ and $\delta$ the assertion of the Proposition holds}.
	
	Let $n=0$ and $\varphi\in\Psi_0$. Then $\varphi$ is an atomic formula.
	At first consider the atomic formula $\varphi$ of the form $q^\tau\delta_\tau r^\tau$. Suppose that $q^\tau=x^\tau$ and $r^\tau=\sigma_\omega^\tau$. Then $U\vDash \varphi[\gamma]$ is equivalent to $\gamma(x)\approx_\tau s_\omega^\tau$ and $U\vDash\varphi[\delta]$ is equivalent to $\delta(x)\approx_\tau s_\omega^\tau$.
	Since, by our condition, $\gamma(x)\approx_\tau\delta(x)$, then assuming $U\vDash\varphi[\gamma]$ and using axioms~E2$^r$ and~E3$^r$ we infer $U\vDash\varphi[\delta]$. The inverse inference is checked in the same way. For the terms $q^\tau$ and $r^\tau$ of other forms the reasons are quite similar.
	
	Now, consider the atomic formula $\varphi$ of the form $(q_0^{\tau_0},\ldots,q_k^{\tau_k})\varepsilon_\tau r^k$ for the type $\tau\equiv[\tau_0,\ldots,\tau_k]\in\Theta_b$. Assume that $q_\lambda^{\tau_\lambda}=x_\lambda^{\tau_\lambda}$ and $r^\tau=u^\tau$ for some variables $x_\lambda$ and $u$. Then $U\vDash\varphi[\gamma]$ is equivalent to $(\gamma(x_0),\ldots,\gamma(x_k))\inn_\tau\gamma(u)$ and $U\vDash\varphi[\delta]$ is equivalent to $(\delta(x_0),\ldots,\delta(x_k))\inn_\tau\delta(u)$.
	
	Suppose $U\vDash\varphi[\gamma]$. Since, by our condition, $\gamma(x_\lambda^{\tau_\lambda})\approx_{\tau_\lambda}\delta(x_\lambda^{\tau_\lambda})$, then using axiom~E4$^r$, we infer $U\vDash\varphi[\delta]$. The inverse inference is checked in the same way. For the terms $q_\lambda^{\tau_\lambda}$ and $r^\tau$ of other kinds the reasons are quite similar.
	
	Assume that assertion $A(m)$ is true for every $m<n$. Let $\varphi\equiv\exists x^\tau\psi$. Then $\psi\in\Psi_{n-1}$. Let be given some evaluations $\gamma$ and $\delta$ such that $\gamma(x^\tau)\approx_\tau\delta(x^\tau)$.
	
	Suppose $U\vDash\varphi[\gamma]$. It is equivalent to $U\vDash\psi[\gamma']$ for some evaluation $\gamma'$ such that $\gamma'(y)=\gamma(y)$ for any $y^\sigma\ne x^\tau$.
	
	Define an evaluation $\delta'$ on $U$ setting $\delta'(y)\equiv\delta(y)$ for every $y^\sigma\ne x^\tau$ and $\delta'(x)\equiv\gamma'(x)$. Then $\delta'(y)=\delta(y)\approx_\sigma\gamma(y)=\gamma'(y)$ and $\delta'(x)=\gamma'(x)$, i.\,e., $\delta'(x)\approx_\tau\gamma'(x)$.
	
	Since $\delta'\approx\gamma'$ in the above indicated sense, by our condition, we conclude that
	$U\vDash\psi[\gamma']\Leftrightarrow U\vDash\psi[\delta']$. Consequently, we obtain the property $U\vDash\psi[\delta']$. By construction, $\delta'(y)=\delta(y)$ for every $y^\sigma\ne x^\tau$.
	By the definition of satisfiability, we conclude that $U\vDash\varphi[\delta]$. The inverse inference of $U\vDash\varphi[\gamma]$ from $U\vDash\varphi[\delta]$ is established quite analogously.
	
	Now, let $\varphi\equiv\psi\land\xi$. Then $\psi,\xi\in\Psi_{n-1}$, whence $U\vDash\psi[\gamma]\Leftrightarrow U\vDash\psi[\delta]$ and  $U\vDash\xi[\gamma]\Leftrightarrow U\vDash\xi[\delta]$. Hence $(U\vDash\psi[\gamma]\wedge U\vDash\xi[\gamma])\Leftrightarrow(U\vDash\psi[\delta]\wedge U\vDash\xi[\delta])$. Thus, $U\vDash\varphi[\gamma]\Leftrightarrow U\vDash\varphi[\delta]$.
	
	Finally, let $\varphi\equiv\lnot\psi$. Then $\psi\in\Psi_{n-1}$. Consequently, 
	$U\vDash\psi[\gamma]\Leftrightarrow U\vDash\psi[\delta]$. From here $U\vDash\varphi[\gamma]\Leftrightarrow\lnot(U\vDash\psi[\gamma])\Leftrightarrow\lnot(U\vDash\psi[\delta])\Leftrightarrow U\vDash\varphi[\delta]$.
	
	This proves that the assertion $A(n)$ is true. By the complete induction principle, the assertion $A(n)$ is true for every natural number $n\in\omega_0$, i.\,e., the assertion of the Proposition holds for every formula $\varphi\in\Psi$.
	
	Now let $\varphi$ be an arbitrary formula of the language  $L(\Sigma_2^g)$. By virtue of  Proposition~\ref{prop-1-systSigma-satisf} we have $U\vDash\varphi[\gamma]\Leftrightarrow U\vDash\varphi^*[\gamma]$ and $U\vDash\varphi[\delta]\Leftrightarrow U\vDash\varphi^*[\delta]$. By the definition of the operation $\varphi\mapsto\varphi^*$, we have $\varphi^*\in\Psi$. As was shown above, $U\vDash\varphi^*[\gamma]\Leftrightarrow U\vDash\varphi^*[\delta]$. As a result, we obtain the equivalence $U\vDash\varphi[\gamma]\Leftrightarrow U\vDash\varphi[\delta]$.
\end{proof}

\subsection{Examples of good models for the second-order equality axioms}\label{sec-systSigma-good}
\mbox{}

Construct for axioms E1--E4 two regular, balanced, extensional, second-order models.

Take $\rho\equiv 0$, $\sigma\equiv[\rho]$, $\Theta\equiv\{\rho,\sigma\}$, $\Omega_\rho=\varnothing$, $\Omega_\sigma=\varnothing$, $\Sigma_c^\rho=\varnothing$, and $\Sigma_c^\sigma=\varnothing$. Then $\Sigma_e\equiv(\delta_\rho,\delta_\sigma)$, $\Theta_b=\{\sigma\}$, $\Sigma_b\equiv\scol{\varepsilon_\tau}{\tau\in\Theta_b}$, i.\,e., $\Sigma_b$ consists of the symbol $\varepsilon_\sigma=\varepsilon_{[\rho]}$ only, and the collection $\Sigma_v\equiv\scol{\Sigma_v^\tau}{\tau\in\Theta}$ consists of a denumerable set $\Sigma_v^\rho$ of variables 
$x^\rho, y^\rho,\ldots$ of the first-order type~$\rho$ and a denumerable set $\Sigma_v^\sigma$ of variables $u^\sigma,v^\sigma,\ldots$ of the second-order type~$\sigma$.

Consider the signature $\Sigma\equiv\Sigma_c\mid\Sigma_e\mid\Sigma_b\mid\Sigma_v$. This language contains the three atomic formulas: $x^\rho\delta_\rho y^\rho$, $u^\sigma\delta_\sigma v^\sigma$ and $x^\rho\varepsilon_\sigma u^\sigma$.

\begin{example}
	Take the set $\Qbb\eq\Zbb\times(\Zbb\razn\{0\})$ of all rational fractions $p\eq\frac{m}{s}$ as the set~$A_1$. Since $\Omega_\rho=\Omega_\sigma=\varnothing$, there are no constants.
	
	For fractions  $p\eq\frac{m}{s}$ and $p\eq\frac{n}{t}$ put $p\approx_\rho q$ if $mt=ns$ in~$\Zbb$. For sets $P,Q\in\cP(A_1)$ put $P\approx_\sigma Q$ if 
	$(\forall\,p\in P\ \exists\,q\in Q\ (p\approx_\rho q))\land (\forall\,q\in Q\ \exists\,p\in P\ (q\approx_\rho p))$. 
	It is clear that the generalized ratio of equality~$\approx_{\si}$ is wider than the usual set-theoretical ratio of equality~$=$ in ST. For example, for $P_0\eq\{\frac{3}{8},\frac{2}{3}\}$ and $Q_0\eq\{\frac{6}{16},\frac{2}{3},\frac{4}{6}\}$ we have $P_0\approx_{\si}Q_0$ but $P_0\ne Q_0$.
	
	For a fraction $p\in A_1$ and a set $P\in\cP(A_1)$ put $p\inn_\sigma P$ if $\exists\,q\in A_1\ (q\approx_\rho p\land q\in P)$.
	It is clear that the generalized ratio of belonging~$\inn_{\si}$ is wider than the usual set-theoretical ratio of belonging~$\in$ in ST. For example, $\frac{6}{16}\inn_{\si}P_0$ and $\frac{6}{9}\inn_{\si}P_0$ but $\frac{6}{16}\notin P_0$ and
	$\frac{6}{9}\notin P_0$.
	
	The collection of terminals $S_v^1\equiv\col{\tau(A_1)}{\tau\in\Theta}$ consists of the terminal $\rho(A_1)=A_1$ and the terminal $\sigma(A_1)=\cP(A_1)$.
	
	The constructed collections form the superstructure $S_1$ over the set~$A_1$.
	Consider the mathematical system $U_1\equiv\zzl A_1,S_1\zzr$ of the signature~$\Sigma$.
\end{example}

\begin{example}
	Take the set of all closed segments $p$ of straight lines on the plane as the set~$A_2$. Since $\Omega_\rho=\Omega_\sigma=\varnothing$, there are no constants.
	
	For segments $p,q\in A_2$ put $p\approx_\rho q$ if $q$ is obtained from $p$ by some parallel transfer. For sets $P,Q\in\cP(A_2)$ of segments put $P\approx_\sigma Q$ if 
$$
 (\forall\,p\in P\ \exists\,q\in Q\ (p\approx_\rho q))\land (\forall\,q\in Q\ \exists\,p\in P\ (q\approx_\rho p)). 
$$
	
	For a segment $p\in A_2$ and a set of segments $P\in\cP(A_2)$ put $p\inn_\sigma P$ if 
	$\exists\,q\in A_2\ (q\approx_\rho p\land q\in P)$, i.\,e., the segment~$p$ can be transferred into the set~$P$ by some parallel transfer.
	
	The collection of terminals $S_v^2\equiv\col{\tau(A_2)}{\tau\in\Theta}$ consists of the terminal $\rho(A_2)=A_2$ and the terminal $\sigma(A_2)=\cP(A_2)$.
	
	The constructed collections form the superstructure $S_2$ over the set~$A_2$.
	Consider the mathematical system $U_2\equiv\zzl A_2,S_2\zzr$ of the signature~$\Sigma$.
\end{example}

\begin{prop}\label{prop-3-systSigma-good}
	The above-constructed mathematical systems $U_1$ and $U_2$ are the regular, balanced, extensional, second-order models for equality axioms \emph{E1--E4}.
\end{prop}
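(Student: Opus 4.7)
The plan is to verify for both $U_1$ and $U_2$, in parallel, that $\approx_\rho$ is an equivalence relation on the support, that balancedness and regularity hold by construction, and that axioms E1--E4 (in their relativized forms E1$^r$--E4$^r$) follow from these two facts together with E1$^r$ at type~$\rho$. Because the definitions of $\approx_\sigma$ and $\inn_\sigma$ in terms of $\approx_\rho$ have identical form in the two examples, almost all of the argument is uniform; only the reflexive, symmetric, and transitive character of $\approx_\rho$ requires a separate check in each case.

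First I would establish that $\approx_\rho$ is an equivalence relation. For $U_1$ this is the standard verification on $\Zbb\times(\Zbb\razn\{0\})$: reflexivity and symmetry of $mt=ns$ are clear, and transitivity, given $mt=ns$ and $nu=pt$ with $t\ne0$, yields $mu=ps$ after distinguishing the cases $n\ne0$ and $n=0$. For $U_2$ parallel transfer is reflexive (zero transfer), symmetric (inverse transfer), and transitive (composition of transfers). This already gives E1$^r$--E3$^r$ at the first-order type $\rho$ for both systems.

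Next, balancedness holds because the definition of $\approx_\sigma$ as mutual $\approx_\rho$-covering is literally the right-hand side of the balancedness condition in the present case $\tau\equiv\sigma\equiv[\rho]$. Regularity follows at once from the definition $p\inn_\sigma P\Leftrightarrow\exists\,q\in A_i\ (q\approx_\rho p\land q\in P)$ together with the symmetry of $\approx_\rho$. From balancedness and the equivalence-relation properties of $\approx_\rho$ one deduces mechanically that $\approx_\sigma$ is itself reflexive, symmetric, and transitive, which gives E1$^r$--E3$^r$ at type~$\sigma$.

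For E4$^r$, which in this signature reduces to the case $\tau\equiv\sigma$, $k=0$, $\tau_0\equiv\rho$, I would assume $x_0\approx_\rho y_0$, $u\approx_\sigma v$, and $x_0\inn_\sigma u$: regularity produces $q\in u$ with $q\approx_\rho x_0$; balancedness applied to $u\approx_\sigma v$ produces $r\in v$ with $q\approx_\rho r$; transitivity of $\approx_\rho$ then yields $y_0\approx_\rho r$, whence $y_0\inn_\sigma v$ by regularity, and the converse direction is symmetric. Finally, extensionality is a formal consequence of balancedness, regularity, and E1$^r$: regularity reduces $p\inn_\sigma P$ to the existence of an $\approx_\rho$-equivalent element of $P$, and balancedness transports such an element between $P$ and $Q$ once the two $\inn_\sigma$-inclusions are assumed. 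I anticipate no serious obstacle here; after the definitions are unfolded, every step rests on the equivalence-relation axioms for $\approx_\rho$ and on the purely set-theoretic shape of the definitions of $\approx_\sigma$ and $\inn_\sigma$.
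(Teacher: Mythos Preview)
Your argument is correct and follows the same approach as the paper, only with considerably more detail: the paper simply declares E1--E4 ``evident'' and balancedness and regularity immediate from the definitions, then proves extensionality by exactly the route you describe (pass from $p\in P$ to $p\inn_\sigma P$, transport to $p\inn_\sigma Q$, apply regularity to land in~$Q$, and invoke the definition of $\approx_\sigma$; the converse direction is credited to~E4$^r$). Your explicit verification of the equivalence-relation properties of $\approx_\rho$ and the careful derivation of E4$^r$ from regularity and balancedness merely unpack what the paper leaves to the reader.
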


\begin{proof}
	The correctness of the equality axioms is evident. The regularity follows from the definition. The same is true for the balance property.
	
	Check the extensionality property. Let $P,Q \in\sigma(A)=\cP(A)$. Assume $p\in P$. Then $p\inn_\sigma P$. Suppose the right side of the extensionality formula. By condition we conclude $p\inn_\sigma Q$. By the regularity property there exists an element
	$q\in Q$ such that $q\approx_\rho p$. The inverse finding of an element $p\in P$ for a given element $q\in Q$ such that $p\approx_\rho q$ is established quite similarly. In accordance with the definition of the equality $\approx_\sigma$ we conclude that $P\approx_\sigma Q$. Thus, we have inferred the left side of the extensionality formula. It follows from the correctness of axiom~E4$^r$ that the left side implies the right one.
\end{proof}

\section{The generalized second-order Dedekind theory of real numbers}\label{sec-Ded}

\subsection{The signature for the generalized and the standard second-order Dedekind theories of real numbers}\label{sec-Ded-sign}
\mbox{}

Consider the first-order type $\pi\eq 0$, the second-order types $\kap\eq[\pi]$, $\rho\eq[\pi,\pi]$, and $\lm\eq[\pi,\pi,\pi]$ and the type domain $\Theta\eq\Theta^g_{R2}\eq\{\pi,\kap,\rho,\lm\}$ with the belonging type subdomain $\Theta_{b}\eq\{\kap,\rho,\lm\}$.

Put $\Omega_{\pi}\eq 2$, $\Omega_{\kap}\eq\vrn$, $\Omega_{\rho}\eq 3$, $\Omega_{\lm}\eq 2$, and consider the collections
\begin{align*}
\Sigma_c^{\pi}&\eq\scol{\si_{\omega}^{\pi}}{\omega\in\Omega_{\pi}}=(\si_0^{\pi},\si_1^{\pi}),& 
\Sigma_c^{\kap}&\eq\scol{\si_{\omega}^{\kap}}{\omega\in\Omega_{\kap}}=\vrn,\\ 
\Sigma_c^{\rho}&\eq\scol{\si_{\omega}^{\rho}}{\omega\in\Omega_{\rho}}=(\si_0^{\rho},\si_1^{\rho},\si_2^{\rho}),&\text{ and }
\Sigma_c^{\lm}&\eq\scol{\si_{\omega}^{\lm}}{\omega\in\Omega_{\lm}}=(\si_0^{\lm},\si_1^{\lm}).
\end{align*}
They compose the signature of constants of the type domain~$\Theta$ of the form
$\Sigma_c=\scol{\Sigma_c^{\tau}}{\tau\in\Theta}=
((\si_0^{\pi},\si_1^{\pi}),\vrn,(\si_0^{\rho},\si_1^{\rho},\si_2^{\rho}),(\si_0^{\lm},\si_1^{\lm}))$
containing the objective first-order constants $\si_0^{\pi}$ and $\si_1^{\pi}$ for denoting the \emph{real numbers~$0$} (\emph{null}) and~$1$ (\emph{unit}), respectively, the predicate second-order constants~$\si_0^{\rho}$, $\si_1^{\rho}$, and~$\si_2^{\rho}$ for denoting the \emph{ratio of negation}, the \emph{ratio of inversion}, and the \emph{ratio of order}, respectively, and the predicate second-order constants~$\si_0^{\lm}$ and~$\si_1^{\lm}$ for denoting the \emph{ratio of addition} and the \emph{ratio of multiplication}, respectively.

Further, along with $\si_0^{\pi}$, $\si_1^{\pi}$, $\si_0^{\rho}$, $\si_1^{\rho}$, $\si_2^{\rho}$, $\si_0^{\lm}$, and $\si_1^{\lm}$ we shall simply write~$0$, $1$, $-$, $/$, $\le$, $+$, and~$\cdot$, respectively.

Take the signature of the generalized equalities of the type domain~$\Theta$ of the form $\Sigma_e\eq\scol{\de_{\tau}}{\tau\in\Theta}=(\de_{\pi},\de_{\kap},\de_{\rho},\de_{\lm})$ containing the first-order equality~$\de_{\pi}$, and the second-order equalities $\de_{[\pi]}$, $\de_{[\pi,\pi]}$, and~$\de_{[\pi,\pi,\pi]}$. 

Take the signature of the generalized belongings of the type domain~$\Theta$ of the form $\Sigma_b\eq\scol{\eps_{\tau}}{\tau\in\Theta_b}=(\eps_{\kap},\eps_{\rho},\eps_{\lm})$.

Finally, take a denumerable set~$\Sigma_v^{\pi}$ of objective variables $x^{\pi},y^{\pi},\ldots$ of the first-order type~$\pi$ and denumerable sets~$\Sigma_v^{\kap}$, $\Sigma_v^{\rho}$, and~$\Sigma_v^{\lm}$ of predicate variables~$u^{\kap},v^{\kap},\ldots$, $u^{\rho},v^{\rho},\ldots$, and~$u^{\lm},v^{\lm},\ldots$ of the second-order types $\kap$, $\rho$, and~$\lm$, respectively.

They form the signature $\Sigma_v\equiv\scol{\Sigma_v^{\tau}}{\tau\in\Theta}=(\Sigma_v^{\pi},\Sigma_v^{\kap},\Sigma_v^{\rho},\Sigma_v^{\lm})$ of variables of the type domain~$\Theta$.

Consider the \emph{generalized signature} $\Sigma_{R2}^g\equiv\Sigma_c|\Sigma_e|\Sigma_b|\Sigma_v$ and the corresponding language~$L(\Sigma_{R2}^g)$. 
Terms $p,q,r,s,\ldots$ of this language are constants and variables only; the atomic equality formulas have the forms 
$q^{\pi}\depi r^{\pi}$, $q^{\kap}\dekap r^{\kap}$, $q^{\rho}\derho r^{\rho}$, and $q^{\lm}\delm r^{\lm}$.
Respectively,  the atomic belonging formulas have the forms 
$q^{\pi}\epskap r^{\kap}$, $(p^{\pi},q^{\pi})\epsrho r^{\rho}$, and $(p^{\pi},q^{\pi},r^{\pi})\epslm s^{\lm}$.

Further, along with $x^{\pi}$, $y^{\pi}$, and $\depi$ we shall simply write~$x$, $y$, and~$\dee$, respectively.

Along with the generalized signature $\Sigma_{R2}^g$ we consider the \emph{standard signature} $\Sigma_{R2}^{st}\equiv\Sigma_c|\Sigma_e^{st}|\Sigma_b^{st}|\Sigma_v$, where in the signature of the standard equalities $\Sigma_e^{st}\eq\scol{\detau^{st}}{\tau\in\Theta}$ the type equalities~$\detau^{st}$ are one and the same \emph{standard equality~$\dest$} and in the signature of the standard belongings $\Sigma_b^{st}\eq\scol{\epstau^{st}}{\tau\in\Theta_b}$ the type belongings~$\epstau^{st}$ are one and the same \emph{standard belonging~$\epst$}.

Respectively, this signature $\Sigma_{R2}^{st}$ generates the standard language~$L(\Sigma_{R2}^{st})$ with atomic equality formulas of the forms 
$q^{\pi}\dest r^{\pi}$, $q^{\kap}\dest r^{\kap}$, $q^{\rho}\dest r^{\rho}$, and $q^{\lm}\dest r^{\lm}$
and with atomic belonging formulas of the forms 
$q^{\pi}\epst r^{\kap}$, $(p^{\pi},q^{\pi})\epst r^{\rho}$, and $(p^{\pi},q^{\pi},r^{\pi})\epst s^{\lm}$ for all terms $p,q,r,s,\ldots$.

\subsection{The axiomatics for the generalized and the standard second-order Dedekind theories of real numbers}\label{sec-Ded-axiom}
\mbox{}

The signature $\Sigma^g_{R2}$ gives the opportunity to define the language~$L(\Sigma^g_{R2})$ and to construct the desired models of the generalized second-order theory of real numbers, but the absence of functional variables in this signature makes the writing of generalized axioms for this theory very unusual. Only the names of these axioms placed in round brackets clarify their customary sense.

The \emph{axioms of the generalized second-order Dedekind theory of real numbers} are the following ones.

\textbf{A1} (the \emph{existence and functionality of the negation}).
$$
(\forall\,x\ \exists\,y\ ((x,y)\epsrho-))\wedge(\forall\,x,y,y'\ ((x,y)\epsrho-)\wedge((x,y')\epsrho-)\impl y\dee y').
$$

\textbf{A2} (the \emph{existence and functionality of the addition}).
$$
(\forall\,x,y\ \exists\,z\ ((x,y,z)\epslm+))\wedge(\forall\,x,y,z,z'\ ((x,y,z)\epslm+)\wedge((x,y,z')\epslm+)\impl z\dee z').
$$

\textbf{A3} (the \emph{existence and functionality of the inversion}).
\begin{multline*}
(\forall\,x\ (\neg(x\dee0)\impl\exists\,y\ ((x,y)\epsrho/)))\wedge(\forall\,x,y\ (((x,y)\epsrho/)\impl\neg(x\dee0)))\wedge\\
\wedge(\forall\,x,y,y'\ ((x,y)\epsrho/)\wedge((x,y')\epsrho/)\impl y\dee y').
\end{multline*}

\textbf{A4} (the \emph{existence and functionality of the multiplication}).
$$
(\forall\,x,y\ \exists\,z\ ((x,y,z)\epslm\cdot))\wedge(\forall\,x,y,z,z'\ ((x,y,z)\epslm\cdot)\wedge((x,y,z')\epslm\cdot)\impl z\dee z').
$$

The appearance of axioms A1--A4 in this list is directly impelled by the absence of functional variables in the signature $\Sigma^g_{R2}$.

\textbf{A5} (the \emph{non-equality of the unit and the null}). $\neg(1\dee0)$.

\textbf{A6} (the \emph{associativity of the addition}).
\begin{multline*}
\forall\,x,y,z\ \forall\,u_1,u_2,v_1,v_2\ (((x,y,u_1)\epslm+)\wedge((u_1,z,u_2)\epslm+)\wedge\\
\wedge((y,z,v_1)\epslm+)\wedge ((x,v_1,v_2)\epslm+) \impl u_2\dee v_2).
\end{multline*}
The writing of axiom A6 in the common way:
$\forall\,x,y,z\ (((x+y)+z)\dee(x+(y+z)))$.

\textbf{A7} (the \emph{neutrality of the null}).
$$
\forall\,x\ \forall u,v\ ((((x,0,u)\epslm+)\impl u\dee x)\wedge(((0,x,v)\epslm+)\impl v\dee x)).
$$

\textbf{A8} (the \emph{elimination of the negation}).
\begin{multline*}
\forall\,x\ \forall\,u_1,u_2,v_1,v_2\ ((((x,u_1)\epsrho-)\wedge((x,u_1,u_2)\epslm+)\impl u_2\dee0)\wedge\\ \wedge(((x,v_1)\epsrho-)\wedge((v_1,x,v_2)\epslm+)\impl v_2\dee0)).
\end{multline*}

\textbf{A9} (the \emph{commutativity of the addition}).
$$
\forall\,x,y\ \forall\,u,v\ (((x,y,u)\epslm+)\wedge((y,x,v)\epslm+)\impl u\dee v).
$$

\textbf{A10} (the \emph{right distributivity of the multiplication with respect the addition}).
\begin{multline*}
\forall\,x,y,z\ \forall\,u_1,u_2,v_1,v_2,v_3\ (((y,z,u_1)\epslm+)\wedge((x,u_1,u_2)\epslm\cdot)\wedge\\
\wedge((x,y,v_1)\epslm\cdot)\wedge((x,z,v_2)\epslm\cdot)\wedge((v_1,v_2,v_3)\epslm+)\impl u_2\dee v_3).
\end{multline*}
The writing of this axiom in the common way:
$\forall\,x,y,z\ ((x\cdot(y+z))\dee(x\cdot y+x\cdot z))$.

\textbf{A11} (the \emph{left distributivity of the multiplication with respect the addition}).
\begin{multline*}
\forall\,x,y,z\ \forall\,u_1,u_2,v_1,v_2,v_3\ (((x,y,u_1)\epslm+)\wedge((u_1,z,u_2)\epslm\cdot)\wedge\\
\wedge((x,z,v_1)\epslm\cdot)\wedge((y,z,v_2)\epslm\cdot)\wedge((v_1,v_2,v_3)\epslm+)\impl u_2\dee v_3).
\end{multline*}

\textbf{A12} (the \emph{associativity of the multiplication}).
\begin{multline*}
\forall\,x,y,z\ \forall\,u_1,u_2,v_1,v_2\ (((x,y,u_1)\epslm\cdot)\wedge((u_1,z,u_2)\epslm\cdot)\wedge\\
\wedge((y,z,v_1)\epslm\cdot)\wedge ((x,v_1,v_2)\epslm\cdot) \impl u_2\dee v_2).
\end{multline*}

\textbf{A13} (the \emph{neutrality of the unit}).
$$
\forall\,x\ \forall u,v\ ((((x,1,u)\epslm\cdot)\impl u\dee x)\wedge(((1,x,v)\epslm\cdot)\impl v\dee x)).
$$

\textbf{A14} (the \emph{elimination of the inversion}).
\begin{multline*}
\forall\,x\ \forall\,u_1,u_2,v_1,v_2\ (\neg(x\dee0)\impl(((x,u_1)\epsrho/)\wedge\\
\wedge((x,u_1,u_2)\epslm\cdot)\impl u_2\dee1) \wedge(((x,v_1)\epsrho/)\wedge((v_1,x,v_2)\epslm\cdot)\impl v_2\dee1)).
\end{multline*}
The writing of A14 in the common way is the following:
$$
 \forall\,x\ (\neg(x\dee0)\impl(x\cdot(x^{-1})\dee1)\wedge((x^{-1})\cdot x\dee1).
$$

\textbf{A15} (the \emph{commutativity of the multiplication}).
$$
\forall\,x,y\ \forall\,u,v\ (((x,y,u)\epslm\cdot)\wedge((y,x,v)\epslm\cdot)\impl u\dee v).
$$

Further, along with $(x,y)\epsrho\le$ we shall write $x\le y$ as well. It gives the opportunity to write the subsequent axioms in a more customary form.

\textbf{A16} (the \emph{reflexivity of the order}). $\forall\,x\ (x\le x)$.

By E4 we get $x\dee y\impl(x\le x \iimpl x\le y)$. Applying A16, we conclude that $x\dee y \vdash x\le y$.

\textbf{A17} (the \emph{antisymmetry of the order}). $\forall\,x,y\ (((x\le y)\wedge (y\le x))\impl x\dee y)$.

\textbf{A18} (the \emph{transitivity of the order}). $\forall\,x,y,z\ (((x\le y)\wedge (y\le z))\impl x\le z)$.

\textbf{A19} (the \emph{linearity of the order}). $\forall\,x,y\ ((x\le y)\vee(y\le x))$.

\textbf{A20} (the \emph{compatibility of the addition and the order}). 
$$
\forall\,x,y,z\ \forall\,u,v\ (x\le y\impl(((x,z,u)\epslm+)\wedge((y,z,v)\epslm+)\impl u\le v)).
$$

\textbf{A21} (the \emph{compatibility of the multiplication and the order}). 
$$
\forall\,x,y\ \forall\,u\ ((x\ge0)\wedge(y\ge0)\impl(((x,y,u)\epslm\cdot)\impl u\ge0)).
$$

\textbf{A22} (the \emph{existence of Dedekind cuts}).
\begin{multline*}
\forall\,u^{\kap},v^{\kap}\ ((\exists\,x\ (x\epskap u^{\kap}))\wedge(\exists\,y\ (y\epskap v^{\kap}))\wedge\\
\wedge(\forall\,z\ ((z\epskap u^{\kap})\vee(z\epskap v^{\kap})))\wedge(\forall\,x,y\ ((x\epskap u^{\kap})\wedge(y\epskap v^{\kap})\impl x\le y))\impl\\
\impl(\exists\,z\ \forall\,x,y\ ((x\epskap u^{\kap})\wedge(y\epskap v^{\kap})\impl(x\le z)\wedge(z\le y)))).
\end{multline*}

Consider the following generalized \emph{extensionality properties}.

\textbf{PE1.} $\forall\, u^\varkappa,v^\varkappa\ (u^\varkappa\dekap v^\varkappa \Leftrightarrow \forall\, x\ (x\epskap u^\varkappa\Leftrightarrow x\epskap v^\varkappa)).$

\textbf{PE2.} $\forall\, u^\rho,v^\rho\ (u^\rho\derho v^\rho \Leftrightarrow \forall\, x,y\ ((x,y)\epsrho u^\rho \Leftrightarrow (x,y)\epsrho v^\rho)).$

\textbf{PE3.} $\forall\, u^\lm,v^\lm\ (u^\lm\delm v^\lm \Leftrightarrow \forall\, x,y,z\ ((x,y,z)\epslm u^\lm \Leftrightarrow (x,y,z)\epslm v^\lm)).$

The theory determined by the language $L(\Sigma_{R2}^g)$ and the set of axioms $\Psi_2^g\eq\{$E1--E4, A1--A22, PE1--PE3$\}$ can be called the \emph{generalized second-order Dedekind theory of real numbers}. It will be denoted by $Th_{R2}^g$.

Respectively, in the language $L(\Sigma_{R2}^{st})$ we can write formulas E1$^{st}$--E4$^{st}$, A1$^{st}$--A22$^{st}$, PE1$^{st}$--PE3$^{st}$, which are obtained from the corresponding formulas E1--E4, A1--A22, PE1--PE3 of the language $L(\Sigma_{R2}^g)$ by the substitution of the generalized type equalities and belongings~$\detau$ and~$\epstau$ by the standard ones~$\dest$ and~$\epst$, respectively.

The theory determined by the language $L(\Sigma_{R2}^{st})$ and axioms E1$^{st}$--E4$^{st}$, A1$^{st}$--A22$^{st}$, PE1$^{st}$--PE3$^{st}$ can be called the \emph{standard second-order Dedekind theory of real numbers}. It will be denoted by $Th_{R2}^{st}$.

\subsection{The canonical generalized and standard second-order Dedekind real axes}\label{sec-Ded-axes}
\mbox{}

Consider the canonical set~$\Rbb$ of all real numbers constructed in the considered set theory ST (see, e.\,g., \cite[1.4]{ZakhRodi2018SFM1} for NBG set theory and~\cite{Zakharov2005LTS} and~\cite[B.1]{ZakhRodi2018SFM1} for the LTS).

For the set~$\Rbb$ and the signature~$\Sigma_{R2}^g$ consider the collections 
\begin{align*}
S_c^{\pi}&\eq\col{s_{\omega}^{\pi}}{\omega\in\Omega_{\pi}}=\zzl s_0^{\pi},s_1^{\pi}\zzr,& 
S_c^{\kap}&\eq\col{s_{\omega}^{\kap}}{\omega\in\Omega_{\kap}}=\vrn,\\ 
S_c^{\rho}&\eq\col{s_{\omega}^{\rho}}{\omega\in\Omega_{\rho}}=\zzl s_0^{\rho},s_1^{\rho},s_2^{\rho}\zzr,&\text{ and }
S_c^{\lm}&\eq\col{s_{\omega}^{\lm}}{\omega\in\Omega_{\lm}}=\zzl s_0^{\lm},s_1^{\lm}\zzr.
\end{align*}
They compose the collection of constants structures
$$
 S_c=\col{S_c^{\tau}}{\tau\in\Theta}=
\big\zzl\zzl s_0^{\pi},s_1^{\pi}\zzr,\vrn,\zzl s_0^{\rho},s_1^{\rho},s_2^{\rho}\zzr,\zzl s_0^{\lm},s_1^{\lm}\zzr\big\zzr
$$
containing the constant structures $s_0^{\pi},s_1^{\pi}\in\pi(\Rbb)=\Rbb$ which are the \emph{neutral real numbers}, the constant structures~$s_0^{\rho},s_1^{\rho},s_2^{\rho}\in\rho(\Rbb)=\cP(\Rbb^2)$, which are the \emph{ratio of negation}, the \emph{ratio of inversion}, and the \emph{ratio of order on~$\Rbb$}, respectively, and the constant structures~$s_0^{\lm},s_1^{\lm}\in\lm(\Rbb)=\cP(\Rbb^3)$ which are the \emph{ratio of addition} and the \emph{ratio of multiplication on~$\Rbb$}, respectively.

Further, along with $s_0^{\pi}$, $s_1^{\pi}$, $s_0^{\rho}$, $s_1^{\rho}$, $s_2^{\rho}$, $s_0^{\lm}$, and $s_1^{\lm}$ we shall simply write~$0_{\Rbb}$, $1_{\Rbb}$, $-_{\Rbb}$, $/_{\Rbb}$, $\le_{\Rbb}$, $+_{\Rbb}$, and~$\cdot_{\Rbb}$, respectively.

Consider the collection of the equality ratios of the form 
\begin{multline*}
S_e\eq\col{\approx_{\tau}}{\tau\in\Theta}=\zzl\approx_{\pi},\approx_{\kap},\approx_{\rho},\approx_{\lm}\zzr\eq\\
\eq\zzl=|\Rbb^2,=|\cP(\Rbb)^2,=|\cP(\Rbb^2)^2,=|\cP(\Rbb^3)^2\zzr
\end{multline*} 
containing in the capacity of the first-order equality ratio~$\approx_{\pi}$ and of the second-order equality ratios~$\approx_{\kap}$, $\approx_{\rho}$, and~$\approx_{\lm}$ the restrictions on the indicated sets one and the same set-theoretical equality in ST.

Consider the collection of the belonging ratios of the form 
\begin{multline*}
 S_b\eq\col{\inn_{\tau}}{\tau\in\Theta}=
\zzl\inn_{\kap},\inn_{\rho},\inn_{\lm}\zzr\eq\\
\eq\zzl\in|(\Rbb\times\cP(\Rbb)),\in|(\Rbb^2\times\cP(\Rbb^2)),\in|(\Rbb^3\times\cP(\Rbb^3))\zzr
\end{multline*}
containing in the capacity of the belonging ratios~$\inn_{\kap}$, $\inn_{\rho}$, 
and~$\inn_{\lm}$ the restrictions on the indicated sets one and the same set-theoretical belonging ratio~$\in$ in ST.

Finally, take the collection of the terminals over the set~$\Rbb$ of the form 
$$
S_v\eq\col{\tau(\Rbb)}{\tau\in\Theta}=\zzl\pi(\Rbb),\kap(\Rbb),\rho(\Rbb),\lm(\Rbb)\zzr=
\zzl\Rbb,\cP(\Rbb),\cP(\Rbb^2),\cP(\Rbb^3)\zzr.
$$

These collections compose the superstructure $S_{R2}\eq\zzl S_c,S_e,S_b,S_v\zzr$ of the signature $\Sigma_{R2}^g$.
The system $\zzl\Rbb,S_{R2}\zzr$ of the signature $\Sigma_{R2}^g$ can be called the \emph{canonical generalized second-order Dedekind real axis in ST}. It will be denoted by $R_2^g$.

Consider an evaluation $\zeta$ on the system $R_2^g$ such that $\zeta(x)\in\pi(\Rbb)=\Rbb$, $\zeta(u^\kap)\in\kap(\Rbb)=\cP(\Rbb)$, $\zeta(u^\rho)\in\rho(\Rbb)=\cP(\Rbb^2)$, and $\zeta(u^\lm)\in\lm(\Rbb)=\cP(\Rbb^3)$.
Thus, we get the evaluated system $\zzl R_2^g,\zeta\zzr$.

The above constructed superstructure $S_{R2}$ is also the superstructure of the signature $\Sigma_{R2}^{st}$. Therefore the system $\zzl\Rbb,S_{R2}\zzr$ is also the system of the signature $\Sigma_{R2}^{st}$. It can be called the \emph{canonical standard second-order Dedekind real axis in ST}. It will be denoted by $R_2^{st}$.

The evaluation $\zeta$ on the system $R_2^g$ considered above is also an evaluation on the system $R_2^{st}$. Therefore we may consider the evaluated system $\zzl R_2^{st},\zeta\zzr$.

Let $B$ be a set and $T_{R2}^{st}$ be a superstructure on $B$ of the signature $\Sigma_{R2}^{st}$.
Consider the system $V\eq\zzl B,T_{R2}^{st}\zzr$ and some evaluation~$\eta$ on~$V$.
For the evaluated system $\zzl V,\eta\zzr$ we shall use the following designations:
$0_B\eq\si_0^{\pi}[\eta]$, $1_B\eq\si_1^{\pi}[\eta]$, $-_B\eq\si_0^{\rho}[\eta]$, $/_B\eq\si_1^{\rho}[\eta]$, $\le_B\eq\si_2^{\rho}[\eta]$, $+_B\eq\si_0^{\lm}[\eta]$, and $\cdot_B\eq\si_1^{\lm}[\eta]$.

The (\emph{standard}) \emph{satisfaction $U\vDash_{st}\vphi[\eta]$ of a formula~$\vphi$ of the language $L(\Sigma^{st}_{R2})$ on the system~$V$ of the signature~$\Sigma^{st}_{R2}$ with respect to the evaluation~$\eta$} differs from the (generalized) satisfaction from~\ref{sec-systSigma-eval} only in the first two points:
\begin{enumerate}
\item[$1'.$] 
 if $q$ and $r$ are terms of a type $\tau\in\Theta$ and $\vphi\eq(q\dest r)$, then $V\vDash_{st}\vphi[\eta]$ is equivalent to $q[\eta]=r[\eta]$;
\item[$2'.$] 
 if $\tau_0,\ldots,\tau_k$ are types from $\Theta$ for $k\ge0$, $\tau\equiv[\tau_0,\ldots,\tau_k]\in\Theta$, $q_0,\ldots,q_k$ are terms of the types $\tau_0$, \ldots,$\tau_k$, respectively, $r$ is a term of the type $\tau$, and $\varphi\equiv(q_0,\ldots,q_k)\epst r$, then $U\vDash\varphi[\eta]$ iff $(q_0[\eta],\ldots,q_k[\eta])\inn r[\eta]$.
\end{enumerate}

Let $\Phi$ be a set of formulas of the language $L(\Sigma^{st}_{R2})$. As in~\ref{sec-systSigma-satisf} the evaluated system $\zzl V,\eta\zzr$ of the signature~$\Sigma^{st}_{R2}$ is called a \emph{standard model for the set~$\Phi$} if $V\vDash_{st}\vphi[\eta]$ for every~$\vphi\in\Phi$.

Now we can formulate some initial theorem about the standard~$R_2^{st}$ and the generalized~$R_2^g$ Dedekind real axes.

\begin{theo}\label{theo-model-Ded-axes}
	\mbox{}
	\begin{enumerate}
		\item
		The mathematical system $R_2^{st}$ is a standard model for the theory $Th_{R2}^{st}$ in the language $L(\Sigma_{R2}^{st})$.
		\item
		The mathematical system $R_2^g$ is a \textup{(\emph{generalized})} model for the theory $Th_{R2}^g$ in the language $L(\Sigma_{R2}^g)$.
	\end{enumerate}
\end{theo}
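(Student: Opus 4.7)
The plan is to reduce both parts to classical theorems of the underlying set theory~ST about the canonical real line~$\Rbb$, with Part~2 ultimately reduced to Part~1.

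For Part~1, I would use that every formula in the axiom set of~$Th_{R2}^{st}$ is closed. By the standard-semantics analogue of the remark from~\ref{sec-systSigma-eval}, the satisfaction of each axiom on $\zzl R_2^{st},\zeta\zzr$ is equivalent to deducibility of its relativization in~ST. In~$R_2^{st}$ the symbols $\dest$ and $\epst$ are interpreted as literal set-theoretic $=$ and $\in$ on the corresponding terminals, and the constant structures $s_0^\pi,\ldots,s_1^\lm$ are interpreted as $0_\Rbb,1_\Rbb,-_\Rbb,/_\Rbb,\le_\Rbb,+_\Rbb,\cdot_\Rbb$, so each relativized axiom collapses to a familiar statement about~$\Rbb$: E1$^{st,r}$--E4$^{st,r}$ are reflexivity, symmetry, transitivity, and the congruence of $\in$ under $=$; A1$^{st,r}$--A4$^{st,r}$ assert totality and single-valuedness of $-_\Rbb,/_\Rbb,+_\Rbb,\cdot_\Rbb$ on the appropriate domains (with the correct dependence on $\neg(x=0)$ for inversion); A5$^{st,r}$--A15$^{st,r}$ are the field axioms in relational form; A16$^{st,r}$--A21$^{st,r}$ are the linear-order and compatibility axioms; A22$^{st,r}$ is precisely the Dedekind-completeness property of~$\Rbb$ for partitions of~$\Rbb$ into non-empty lower and upper subsets; and PE1$^{st,r}$--PE3$^{st,r}$ are the axiom of extensionality applied on $\cP(\Rbb),\cP(\Rbb^2),\cP(\Rbb^3)$. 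All of these hold for the canonical~$\Rbb$ built in~ST.

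For Part~2, I would exploit the fact that the superstructure~$S_{R2}$ is chosen so that $\approx_\tau$ is the restriction of the set-theoretic $=$ to $\tau(\Rbb)^2$ and $\inn_\tau$ is the restriction of the set-theoretic $\in$ to $\check\tau(\Rbb)\times\tau(\Rbb)$ for every $\tau\in\Theta$. Therefore, for any atomic formula $q\detau r$ or $(q_0,\ldots,q_k)\epstau r$ and any evaluation~$\gamma$, the generalized interpretation coincides literally with the standard one on the same terms. A routine induction on formula complexity then yields $R_2^g\vDash\vphi[\gamma]\iimpl R_2^{st}\vDash_{st}\vphi^{st}[\gamma]$, where $\vphi^{st}$ is obtained from~$\vphi$ by replacing each $\detau$ by $\dest$ and each $\epstau$ by $\epst$. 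Applying this equivalence to every axiom of $\Psi_2^g$ and invoking Part~1 concludes Part~2.

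The main obstacle is purely bookkeeping: matching the unusual relational formulations of A1--A22 (forced by the absence of functional variables in~$\Sigma_{R2}^g$) with the familiar functional forms of the ordered-field, order, and completeness axioms, and being careful that A22$^r$ indeed encodes the Dedekind-completeness property of~$\Rbb$ as constructed in~ST. No genuinely new content enters beyond the classical construction of~$\Rbb$ in the chosen set theory.
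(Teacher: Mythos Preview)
Your proposal is correct and follows essentially the same route as the paper: for Part~1 you reduce satisfaction of the closed axioms to deducibility of their relativizations in~ST and appeal to the classical construction of~$\Rbb$, and for Part~2 you use that in the superstructure~$S_{R2}$ the ratios $\approx_\tau$ and $\inn_\tau$ are defined as the literal restrictions of $=$ and $\in$, so generalized and standard satisfaction coincide and Part~2 reduces to Part~1. Your explicit induction on formula complexity for Part~2 is a welcome sharpening of the paper's terse appeal to the inclusions $=|\tau(\Rbb)^2\subset{\approx_\tau}$ and $\in|\check\tau(\Rbb)\times\tau(\Rbb)\subset{\inn_\tau}$, since mere inclusions would not by themselves transfer formulas containing negations; you correctly use that here these inclusions are equalities by the very definition of~$S_{R2}$.
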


\begin{proof}
	1.
	Note that all the axioms from the set $\Psi_2^g$ are closed formulas. Therefore the satisfaction $R_2^{st}\vDash \al[\zeta]$ for $\al\in\Psi_2^g$ means the deducibility of the relativization~$\al^r$ of~$\al$ on~$\Rbb$ in the considered axiomatic set theory~ST. But the corresponding deducibility of every~$\al^r$ is very well demonstrated in mathematical literature (see, for example, \cite{Bourbaki(III3-4)1960,Fef1963,Landau1958,HewStr1965,GrLiFi1968,ZakhRodi2018SFM1}).
	
	2.
	This assertion follows directly from assertion~1 by virtue of the inclusions $=|\tau(\Rbb)\times\tau(\Rbb)\subset\approx_{\tau}$ and $\in|\check{\tau}(\Rbb)\times\tau(\Rbb)\subset\inn_{\tau}$ from~\ref{sec-systSigma-eval}, where the left parts of the inclusions are the restrictions of the usual set-theoretical ratios~$=$ and~$\in$ on the indicated sets. 
\end{proof}

The models from Theorem~\ref{theo-model-Ded-axes} are called \emph{canonical}.

It is well known that the theory $Th_{R2}^{st}$ is categorical. On the contrary, we shall prove that the theory~$Th_{R2}^g$ is non-categorical. More exactly, using the initial canonical model~$R_2^g$ with the support~$\Rbb$ we shall prove the existence of some non-canonical models for the theory~$Th_{R2}^g$ having arbitrary large powers.

This statement can be proven with the help of the generalized infrafiltration theorem (see, e.\,g., \cite{ZakhYash2014} or~\cite[C.3.2]{ZakhRodi2018SFM1}). But to make the paper self-contained we prefer to prove here some more simple variant of the generalized infrafiltration theorem than it is presented in the indicated works.

\section{The infraproduct construction of evaluated systems of the signature $\Sigma_2^g$}\label{sec-main}

\subsection{Infraproducts of collections of evaluated systems of the signature $\Sigma_2^g$}\label{sec-infra-prod}
\mbox{}

Let $F$ be a fixed set and $\col{U_f}{f\in F}$ be a fixed collection of mathematical systems of the signature $\Sigma_2^g$ with true generalized equalities and belongings.

By definition, $U_f\equiv\zzl A_f,S_f\zzr$. Consider the set $A\equiv\prod\col{A_{f}}{f\in F}$.

Let $\tau\equiv[\tau_0,\ldots,\tau_k]$ be a second-order type and $k\ge0$.
If $\mu\in k+1$, then $\tau_\mu=0$. Thus, we see that $\tau_\mu(A)=A=\prod\col{A_{f}}{f\in F}=\prod\col{\tau_\mu(A_f)}{f\in F}$.
For elements $p\in\check\tau(A)\eq\tau_0(A)\times\cdots\times\tau_k(A)=A^{k+1}$ and $f\in F$ define the element $p(f)\in\check\tau(A_f)=\tau_0(A_f)\times\cdots\times\tau_k(A_f)=A_f^{k+1}$ setting $p(f)(\mu)\equiv p(\mu)(f)$ 
for every $\mu\in k+1$.

For elements $P\subset\check\tau(A)$ and $f\in F$ define the element $P\langle f\rangle\subset\check\tau(A_f)$ setting 
$P\langle f\rangle\equiv\{\xi\in\check\tau(A_f)\mid \exists\, p\in P\ (p(f)=\xi)\}$.

Let $\cD$ be a subset of the set $\cP(F)$, i.\,e., an ensemble on~$F$. Define some superstructure $S$ of the signature $\Sigma_2^g$ over the set~$A$.

First, define constant structures $s_\omega^\tau\in\tau(A)$ for $\tau\in\Theta$ and $\omega\in\Omega_\tau$.

If $\tau$ is a first-order type, then $\tau(A)=\prod\col{\tau(A_f)}{f\in F}$. Therefore define $s_\omega^\tau\in\tau(A)$ setting $s_\omega^\tau(f)\equiv s_{\omega f}^\tau$ for every $f\in F$.

Put $s_{\omega}^\tau\equiv\{p\in\check\tau(A)\mid\forall\, f\in F\ (p(f)\in s_{\omega f}^\tau)\}$ if $\tau\eq[\tau_0,\ldots,\tau_k]$ is a second-order type.

As a result, we obtain the collections $S_c^\tau\equiv\col{s_\omega^\tau}{\omega \in\Omega_\tau}$ and the collection $S_c\equiv\col{S_c^\tau}{\tau\in\Theta}$.

Now define generalized equality ratios $\approx_\tau\subset\tau(A)\times\tau(A)$.
If $\tau$ is the first-order type, then for $p,q\in\tau(A)$ put $p\approx_\tau q$ if
$\exists\,G\in\cD\ \forall\,g\in G\ (p(g)\approx_{\tau,g}q(g))$.

If $\tau\eq[\tau_0,\ldots,\tau_k]$ is a second-order type, then for $P,Q\subset\check\tau(A)$ put $P\approx_\tau Q$ if
$\exists\,G\in\cD\ \forall\, g\in G\ (P\langle g\rangle\approx_{\tau,g} Q\langle g\rangle)$.

As a result, we obtain the collection $S_e\equiv\col{\approx_\tau}{\tau\in\Theta}$.

Now define generalized belonging ratios $\inn_\tau\subset \check{\tau}(A)\times\tau(A)$. 

By definition, $\tau=[\tau_0,\ldots,\tau_k]$ for some $\tau_0,\ldots,\tau_k\in\Theta$. For $p\in\check\tau(A)$ and $P\subset\check\tau(A)$ put $p\inn_\tau P$ if
$\exists\,G\in\cD\ \forall\, g\in G\ (p(g) \inn_{\tau,g}P\langle g\rangle)$. Note that the usage of a generalized belonging ratio was explored in the forcing method in the form $x\in_p y$ (see, e.\,g., \cite[9.8]{Shoenfield2001}).

Thus, we obtain the collection $S_b\equiv\col{\inn_\tau}{\tau\in\Theta_b}$.

Consider also the collection $S_v\equiv\col{\tau(A)}{\tau\in\Theta}$ consisting of the $\tau$-terminals of the set~$A$.

The constructed collections compose the superstructure $S\equiv\zzl S_c,S_e,S_b,S_v\zzr$ over the set~$A$. Therefore we can consider the mathematical system $U\equiv\zzl A,S\zzr$ of the signature~$\Sigma_2^g$. It will be called the \emph{infra-{$\cD$}-product of the collection of mathematical systems $\col{U_f}{f\in F}$ of the generalized second-order signature} $\Sigma_2^g$ and will be denoted  by $\infraDprod\col{U_f}{f\in F}$.

An ensemble~$D$ on~$F$ is called a \emph{filter on~$F$} if it has the following properties:
\begin{enumerate}
	\item $\forall\,G,H\in\cD\ (G\cap H\in\cD)$;
	\item $\forall\,G\in\cD\ \forall\,H\in\cP(F)\ (G\subset H\impl H\in\cD)$.
\end{enumerate}

A filter~$\cD$ is called \emph{proper} if $\cD\ne\cP(F)$. A proper filter~$\cD$ is called an \emph{ultrafilter} if for any proper filter~$\cE$ on~$F$ such that $\cD\subset\cE$ we have $\cD=\cE$, i.\,e., $\cD$ is a maximal element in the set of all proper filters on~$F$.

A pair $\zzl G,H\zzr$ of subsets of $F$ is called a \emph{binary partition of~$F$} if $G\cap H=\vrn$ and $G\cup H=F$. 
A filter~$\cD$ is a ultrafilter iff it has the \emph{binary partition property}, i.\,e., if for every binary partition 
$\zzl G,H\zzr$ of~$F$ either $G\in\cD$ or $H\in\cD$ (see~\cite[Exercise~2.119]{Mendelson1997}). 

Further on, we assume that $\cal D$ is a filter.

Now let $\col{\zzl U_f,\gamma_f\zzr}{f\in F}$ be a collection of evaluated mathematical systems of the second-order signature $\Sigma_2^g$  with true generalized equalities and belongings.

Define an evaluation $\gamma$  on the system $U\equiv \infraDprod\col{U_f}{f\in F}$ in the following way.

Let $x$ be a variable of a type $\tau$. If $\tau$ is the first-order type, then define $\gamma(x)\in\tau(A)$ setting $\gamma(x)(f)\equiv\gamma_f(x)$ for every $f\in F$. If $\tau=[\tau_0,\ldots,\tau_k]$ is a second-order type, then put $\gamma(x)\equiv\{p\in\check\tau(A)\mid\forall f\in F\ (p(f)\in\gamma_f(x))\}$.

The evaluation $\gamma$ will be called the \emph{crossing of the collection of evaluations} $\col{\gamma_f}{f\in F}$ and will be denoted by $\bowtie\col{\gamma_f}{f\in F}$.

\begin{lem}\label{lem-2-infra-prod}
	Let $\col{\lgroup U_f,\gamma_f\rgroup}{f\in F}$ be a collection of evaluated mathematical systems of the second-order signature $\Sigma_2^g$ and let every system $\zzl U_f,\gamma_f\zzr$ be a model for equality axioms \emph{E1--E4}. Then the pair $\zzl\infraDprod\col{U_f}{f\in F}, \bowtie\col{\gamma_f}{f\in F}\zzr$ is also a model for axioms \emph{E1--E4}.
\end{lem}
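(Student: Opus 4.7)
The plan is to exploit the fact that E1--E4 are closed formulas, so by the remark in Section~\ref{sec-systSigma-eval} their satisfaction in $U \equiv \infraDprod\col{U_f}{f\in F}$ with respect to any evaluation reduces to the correctness of the relativized statements E1$^r$--E4$^r$ in the ambient set theory. The proof therefore dispenses with the evaluation $\bowtie\col{\gamma_f}{f \in F}$ and works directly with the definitions of $\approx_\tau$ and $\inn_\tau$ on the infraproduct. The only outside ingredients are the hypothesis that each $U_g$ satisfies E1$^r$--E4$^r$ pointwise, and the filter axioms on $\cD$ (in particular, $F\in\cD$ and closure under finite intersection).

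First I would dispatch E1$^r$--E3$^r$, treating first-order and second-order types $\tau$ uniformly up to the extra layer $P\mapsto P\langle g\rangle$. For E1$^r$ the witness is $G\equiv F\in\cD$, with pointwise reflexivity supplying $p(g)\approx_{\tau,g}p(g)$ (respectively $P\langle g\rangle\approx_{\tau,g}P\langle g\rangle$). E2$^r$ reuses whatever witness $G\in\cD$ is supplied by the hypothesis $p\approx_\tau q$, invoking pointwise symmetry at each $g\in G$. E3$^r$ combines two witnesses $G,H\in\cD$ into their intersection $G\cap H\in\cD$ and applies pointwise transitivity there.

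The main step is E4$^r$ for a second-order type $\tau\equiv[\tau_0,\ldots,\tau_k]$, whose parents $\tau_i$ are first-order in $\Sigma_2^g$. Given $x_i\approx_{\tau_i}y_i$ ($i=0,\ldots,k$), $u\approx_\tau v$, and a witness for $(x_0,\ldots,x_k)\inn_\tau u$, I would collect the $k+3$ defining sets in $\cD$ and intersect them to obtain $M\in\cD$. At each coordinate $g\in M$ all the relevant data specializes: $x_i(g)\approx_{\tau_i,g}y_i(g)$, $u\langle g\rangle\approx_{\tau,g}v\langle g\rangle$, and $(x_0(g),\ldots,x_k(g))\inn_{\tau,g}u\langle g\rangle$. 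Pointwise E4$^r$ in $U_g$ then delivers $(y_0(g),\ldots,y_k(g))\inn_{\tau,g}v\langle g\rangle$ throughout $M$, whence $(y_0,\ldots,y_k)\inn_\tau v$ by definition. The converse implication of the biconditional in E4 follows by a symmetric argument (using pointwise E2$^r$ to swap the roles of the $x_i,u$ and $y_i,v$).

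The main obstacle, if any, is purely notational: one must keep the two flavours of generalized equality straight---the first-order $p\approx_\tau q$ compares the sections $p(g),q(g)\in A_g$, while the second-order $P\approx_\tau Q$ compares the slices $P\langle g\rangle,Q\langle g\rangle\in\cP(\check\tau(A_g))$---and confirm that the slicing operation $(\cdot)\langle g\rangle$ interacts correctly with the tuple construction $(x_0,\ldots,x_k)$ appearing on the left-hand side of the generalized belonging in E4$^r$. Once this bookkeeping is in place, the filter-theoretic gluing of witnesses renders the argument essentially mechanical.
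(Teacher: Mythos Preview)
Your proposal is correct and follows essentially the same route as the paper's proof. The paper is simply terser: it declares E1--E3 ``obvious'' and, for E4, bundles the $k+1$ componentwise first-order equalities into a single witness $G_2\in\cD$ (via the abbreviation $p\approx_{\check\tau}p'$), then intersects three sets $G_1\cap G_2\cap G_3$ rather than your $k+3$, before invoking pointwise E4 in each $U_g$ exactly as you do.
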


\begin{proof}
	Let $t_0,t_0'\in\tau_0(A)$, \ldots, $t_k,t_k'\in\tau_k(A)$, $P,P'\subset\check\tau(A)=\tau_0(A)\times\ldots\times\tau_k(A)$, $p\equiv(t_0,\ldots,t_k)$, $p'\equiv(t_0',\ldots,t_k')$, $p\approx_{\check\tau}p'$, and $P\approx_\tau P'$.
	
	Assume that $p\inn_\tau P$. According to the definition of the belonging, we get
	$\exists\,G_1\in\cD\ \forall\,g\in G_1\ (p(g)\inn_{\tau,g}P\langle g\rangle)$. By the definition of the first-order equality, $\exists\, G_2\in\cD\ \forall\, g\in G_2\ (p(g)\approx_{\check\tau,g}p'(g))$. Finally, by the definition of the second-order equalities $\exists\, G_3\in\cD\ \forall\,g\in G_3\ (P\langle g\rangle\approx_{\tau,g}P'\langle g\rangle)$. 
	Since every system $\zzl U_g,\gamma_g\zzr$ satisfies~E4, we see that $p'(g)\inn_{\tau,g}P'\langle g\rangle$ for every 
	$g\in G\equiv G_1\cap G_2\cap G_3$. Thus, $p'\inn_\tau P'$. Hence, $p\inn_\tau P\Rightarrow p'\inn_\tau P'$. The inverse implication is checked quite similarly. This proves axiom~E4. The validity of axioms E1, E2, E3 is obvious.
\end{proof}

Further, for a formula $\varphi\in L(\Sigma)$ the set $\{f\in F\mid U_f\vDash\varphi[\gamma_f]\}$ will be denoted by $G_\varphi$.

\begin{lem}\label{lem-3-infra-prod}
	Let $\tau=[\tau_0,\ldots,\tau_k]$ be a second-order type. Let $s_\omega^\tau$ be the constants constructed above for the support $A\equiv\prod\col{A_f}{f\in F}$. Then $s_\omega^\tau\langle f\rangle=s_{\omega f}^\tau$ for every $f\in F$.
\end{lem}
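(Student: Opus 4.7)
My plan is to prove the equality $s_\omega^\tau\langle f\rangle = s_{\omega f}^\tau$ by a direct double-inclusion argument, handling each direction separately. The forward inclusion is essentially unpacking a definition, while the reverse requires a pointwise construction that leans on the axiom of choice.

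For the inclusion $s_\omega^\tau\langle f\rangle \subset s_{\omega f}^\tau$, suppose $\xi \in s_\omega^\tau\langle f\rangle$. By definition of the operation $P \mapsto P\langle f\rangle$ there exists $p \in s_\omega^\tau$ with $p(f) = \xi$, and by the construction of $s_\omega^\tau$ we immediately get $p(f) \in s_{\omega f}^\tau$, so $\xi \in s_{\omega f}^\tau$. This step is purely definitional and requires no further work.

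For the reverse inclusion $s_{\omega f}^\tau \subset s_\omega^\tau\langle f\rangle$, fix $\xi \in s_{\omega f}^\tau \subset \check\tau(A_f) = A_f^{k+1}$; the task is to produce some $p \in \check\tau(A) = A^{k+1}$ satisfying $p(f) = \xi$ and $p(g) \in s_{\omega g}^\tau$ for every $g \in F$. Using the axiom of choice (applied in the ambient set theory ST), select an element $\eta_g \in s_{\omega g}^\tau$ for every $g \in F \setminus \{f\}$, and set $\eta_f \equiv \xi$. Then for each coordinate $\mu \in k+1$, define the element $p(\mu) \in A = \prod\col{A_g}{g \in F}$ by the formula $p(\mu)(g) \equiv \eta_g(\mu)$. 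By construction $p(g) = (p(0)(g), \ldots, p(k)(g)) = \eta_g \in s_{\omega g}^\tau$ for every $g \in F$; hence $p \in s_\omega^\tau$, and since $p(f) = \eta_f = \xi$ we conclude $\xi \in s_\omega^\tau\langle f\rangle$.

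The step I expect to be the main issue is the reverse inclusion: its correctness silently presupposes that every factor constant structure $s_{\omega g}^\tau$ is non-empty, for otherwise $s_\omega^\tau$ itself would be empty while $s_{\omega f}^\tau$ need not be. In the present setup this non-emptiness is provided by the fact that the systems $U_f$ are models for the theory under consideration, which forces the predicate constants (negation, inversion, order, addition, multiplication) to be non-empty relations; more generally, one should regard it as a standing hypothesis on the signature. Granting this, the axiom of choice gives the simultaneous selection of the witnesses $\eta_g$, and the rest of the argument is mechanical gluing of coordinates.
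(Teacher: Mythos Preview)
Your proof is correct and follows essentially the same double-inclusion argument as the paper: both unwind the definition for the forward inclusion and, for the reverse, use the axiom of choice to select witnesses $\eta_g\in s_{\omega g}^\tau$ for $g\ne f$ and then glue them coordinatewise into an element $p\in\check\tau(A)$. Your explicit remark that the reverse inclusion tacitly requires each $s_{\omega g}^\tau$ to be non-empty is a point the paper passes over in silence.
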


\begin{proof}
	Let $\xi\in s_\omega^\tau\langle f\rangle$, i.\,e., $\xi=p(f)$ for some $p\in s_\omega^\tau$. 
	By definition, $\xi=p(f)\in s_{\omega f}^\tau$. Consequently, $s_\omega^\tau\langle f\rangle\subset s_{\omega f}^\tau$.
	
	Conversely, let $\xi_f\in s_{\omega f}^\tau$. Using the axiom of choice we can find a collection 
	$\scol{\xi_g}{g\in F\setminus\{f\}}$ such that $\xi_g\in s_{\omega g}^\tau$. Define the element $p\in\check\tau(A)$ setting $p(\mu)(g)\equiv\xi_g(\mu)$ for every $g\in F$ and every $\mu\in k+1$. Then $p(g)=\xi_g\in s_{\omega g}^\tau$ for every $g\in F$ implies $p\in s_{\omega}^\tau$. Since $\xi_f=p(f)$, we have $\xi_f\in s_\omega^\tau\langle f\rangle$. 
	Hence, $s_{\omega f}^\tau\subset s_\omega^\tau\langle f\rangle$.
\end{proof}

\begin{lem}\label{lem-4-infra-prod}
	Let $\tau=[\tau_0,\ldots,\tau_k]$ be a second-order type. Let $x$ be a variable of the type~$\tau$ and $\gamma(x)$ be the evaluation constructed above for the system $U\equiv\zzl A,S\zzr$. Then $\gamma(x)\langle f\rangle=\gamma_f(x)$ for every $f\in F$.
\end{lem}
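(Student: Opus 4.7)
The plan is to mimic the proof of Lemma \ref{lem-3-infra-prod} almost verbatim, since the definition of $\gamma(x)$ for a second-order variable $x$ of type $\tau\eq[\tau_0,\ldots,\tau_k]$ has exactly the same ``fiberwise'' shape as the definition of the constant $s^\tau_\omega$: both pick out the subset of $\check\tau(A)$ consisting of tuples $p$ whose every slice $p(f)$ lies in the prescribed local set ($\gamma_f(x)$ or $s^\tau_{\omega f}$, respectively). I would split the equality into two inclusions.

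For the forward inclusion $\gamma(x)\langle f\rangle\subset\gamma_f(x)$, I would take $\xi\in\gamma(x)\langle f\rangle$, pick a $p\in\gamma(x)$ with $\xi=p(f)$, and read off $p(f)\in\gamma_f(x)$ directly from the definition of $\gamma(x)$; this is immediate.

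For the reverse inclusion $\gamma_f(x)\subset\gamma(x)\langle f\rangle$, I would start with an arbitrary $\xi_f\in\gamma_f(x)$ and invoke the axiom of choice (in ST) to pick, for every $g\in F\setminus\{f\}$, an element $\xi_g\in\gamma_g(x)\subset\check\tau(A_g)=\tau_0(A_g)\times\cdots\times\tau_k(A_g)$. I then assemble $p\in\check\tau(A)=A^{k+1}$ by setting $p(\mu)(g)\eq\xi_g(\mu)$ for each $\mu\in k+1$ and each $g\in F$; this is well-defined because each $\tau_\mu=0$ and $\tau_\mu(A)=A=\prod\col{A_g}{g\in F}$. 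By construction $p(g)=\xi_g\in\gamma_g(g)$ for every $g\in F$, whence $p\in\gamma(x)$ by definition, and in particular $\xi_f=p(f)\in\gamma(x)\langle f\rangle$.

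There is no real obstacle here: the only mildly delicate point is the necessity of the axiom of choice to simultaneously select witnesses $\xi_g$ for $g\ne f$, just as in Lemma \ref{lem-3-infra-prod}. Since ST (NBG, ZF, or LTS with the usual choice axiom) supports this, the argument goes through and I would just refer back to Lemma \ref{lem-3-infra-prod} for the identical pattern, replacing the family $\scol{s^\tau_{\omega g}}{g\in F}$ by $\scol{\gamma_g(x)}{g\in F}$ throughout.
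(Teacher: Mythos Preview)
Your proposal is correct and follows exactly the approach the paper intends: the paper's own proof simply says it is ``completely similar to the proof of the previous lemma,'' and that is precisely what you carry out by replacing $s^\tau_{\omega g}$ with $\gamma_g(x)$ throughout. Note only the small slip $p(g)=\xi_g\in\gamma_g(g)$, which should read $\gamma_g(x)$.
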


The proof is completely similar to the proof of the previous lemma.

\subsection{Infrafilteration of formulas of the second-order language $L(\Sigma_2^g)$}\label{sec-infra-filt}
\mbox{}

Consider a non-empty set $F$ and a filter $\cD$ on $F$.

By analogy with the first order language (see~\cite[\S\,17]{ErshPal1984}, \cite[8.2]{Maltsev1973}) a formula $\varphi$ of the language $L(\Sigma_2^g)$ of the second-order signature $\Sigma_2^g$ with generalized  equalities and belongings will be called \emph{infrafiltrated with respect to the filter $\cD$} if for every collection $\col{\zzl U_f,\gamma_f\zzr}{f\in F}$ of evaluated mathematical systems of the second-order signature $\Sigma_2^g$  with true generalized equalities and belongings the property $\infraDprod\col{U_f}{f\in F}\vDash \varphi[\bowtie\col{\gamma_f}{f\in F}]$ is equivalent to the property $\{g\in F\mid U_g\vDash\varphi[\gamma_g]\}\in\cD$.

\begin{lem}\label{lem-5-infra-filt}
	Every atomic formula is infrafiltrated with respect to any filter $\cD$ on the set~$F$.
\end{lem}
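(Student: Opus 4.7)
The plan is to treat the two forms of atomic formula from Section~\ref{sec-tftsf-signform} in parallel, reducing both to a single ``fibre identity'' for terms. Writing $U\eq\infraDprod\col{U_f}{f\in F}$ and $\gamma\eq\bowtie\col{\gamma_f}{f\in F}$, the preliminary observation is: for every term $q$ of a type $\tau\in\Theta$ one has $q[\gamma](f)=q[\gamma_f]$ when $\tau$ is first-order, and $q[\gamma]\langle f\rangle=q[\gamma_f]$ when $\tau$ is second-order, for every $f\in F$. In the constant case this is the direct definition of $s_\omega^\tau$ on $A$ for first-order types, and Lemma~\ref{lem-3-infra-prod} for second-order types; in the variable case it is, respectively, the construction of $\gamma(x^\tau)$ in Section~\ref{sec-infra-prod} and Lemma~\ref{lem-4-infra-prod}.

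Granting this, I would first treat the equality atomic formula $\varphi\eq q\detau r$. Unfolding the definition of $\approx_\tau$ on the infraproduct and applying the fibre identity to both $q[\gamma]$ and $r[\gamma]$, the satisfaction $U\vDash\varphi[\gamma]$ rewrites uniformly across the first- and second-order subcases as: there exists $G\in\cD$ with $q[\gamma_g]\approx_{\tau,g}r[\gamma_g]$ for every $g\in G$, i.e.\ $U_g\vDash\varphi[\gamma_g]$ for every $g\in G$. Next, for the belonging atomic formula $\varphi\eq(q_0^{\tau_0},\ldots,q_k^{\tau_k})\epstau r^\tau$ with $\tau\eq[\tau_0,\ldots,\tau_k]$, setting $p\eq(q_0[\gamma],\ldots,q_k[\gamma])\in\check\tau(A)$ and using the definition $p(g)(\mu)\eq p(\mu)(g)$ together with the fibre identity (applied componentwise to $p$ and via Lemma~\ref{lem-3-infra-prod} or Lemma~\ref{lem-4-infra-prod} to $r$), one obtains analogously $U\vDash\varphi[\gamma]$ iff there exists $G\in\cD$ with $(q_0[\gamma_g],\ldots,q_k[\gamma_g])\inn_{\tau,g}r[\gamma_g]$ for every $g\in G$, which is again $U_g\vDash\varphi[\gamma_g]$ for every $g\in G$.

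The final step is to convert the existential formulation ``$\exists G\in\cD\ \forall g\in G\ (U_g\vDash\varphi[\gamma_g])$'' into the set-membership statement $G_\varphi\eq\{g\in F\mid U_g\vDash\varphi[\gamma_g]\}\in\cD$. This uses only the filter axioms from Section~\ref{sec-infra-prod}: the forward direction is upward closedness of $\cD$ applied to the inclusion $G\subset G_\varphi$, and the backward direction is the trivial choice $G\eq G_\varphi$. No serious obstacle is anticipated; the only point demanding vigilance is the second-order subcase, where one must keep the distinction between $p(f)$ and $P\langle f\rangle$ straight and invoke Lemmas~\ref{lem-3-infra-prod} and~\ref{lem-4-infra-prod} in both the constant and the variable instances in order to identify $q[\gamma]\langle f\rangle$ with $q[\gamma_f]$.
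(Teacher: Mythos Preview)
Your proposal is correct and follows essentially the same route as the paper's proof: both arguments reduce the atomic cases to the fibrewise identities $q[\gamma](f)=q[\gamma_f]$ (first order) and $q[\gamma]\langle f\rangle=q[\gamma_f]$ (second order), supplied by the construction of $s_\omega^\tau$, $\gamma$ and Lemmas~\ref{lem-3-infra-prod}--\ref{lem-4-infra-prod}, and then conclude via upward closedness of the filter from $G\subset G_\varphi$. The only organisational difference is that the paper argues a representative term combination (variable against constant) and declares the remaining combinations analogous, whereas you isolate the fibre identity for arbitrary terms up front and treat all combinations uniformly; this is a packaging choice, not a different method.
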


\begin{proof}
	First, consider an atomic formula $\varphi$ of the form $q^\tau\delta_\tau r^\tau$. Assume that $q^\tau=x^\tau$ and $r^\tau=\sigma_\omega^\tau$. Then $U\vDash\varphi[\gamma]$ is equivalent to $\gamma(x)\approx_\tau s_\omega^\tau$, and analogously for the pair $\zzl U_f,\gamma_f\zzr$.
	
	Let $\tau$ be the first-order type. Let $G_\varphi\in\cD$, i.\,e., $\gamma_g(x)\approx_{\tau,g}s_{\omega g}^\tau$ for every 
	$g\in G_\varphi\in\cD$. Then $\gamma_g(x)=\gamma(x)$ and $s_{\omega g}^\tau=s_\omega^\tau(g)$ implies $\gm(x)(g)\approx_{\tau,g}s_{\omega}^{\tau}(g)$ for every $g\in G_\varphi\in\cD$. Thus, $\gamma(x)\approx_\tau s_\omega^\tau$, i.\,e., $U\vDash\varphi[\gamma]$.
	
	Conversely, let $U\vDash\varphi[\gamma]$, i.\,e., $\gamma(x)\approx_\tau s_\omega^\tau$. Then there exists $G\in\cD$ such that $\gamma(x)(g)\approx_{\tau,g}s_\omega^\tau(g)$ for every $g\in G$. But it means that 
	$\gamma_g(x)\approx_{\tau,g}s_{\omega g}^\tau$, i.\,e., $U_g\vDash\varphi[\gamma_g]$ for every $g\in G\in\cD$. 
	Since $G\subset G_\varphi$, we have $G_\varphi\in\cal D$.
	
	Now let $\tau\equiv[\tau_0,\ldots,\tau_k]$ be a second-order type. Let $G_\varphi\in\cD$, i.\,e., $\gamma_g(x)\approx_{\tau,g}s_{\omega g}^\tau$ for every $g\in G_\varphi\in\cD$.  According to Lemmas~\ref{lem-3-infra-prod} and~\mref{lem-4}{infra-prod}, the equalities $s_{\omega g}^\tau=s_\omega^\tau\langle g\rangle$ and 
	$\gamma_g(x)=\gamma(x)\langle g\rangle$ are correct. 
	Therefore $\gamma(x)\langle g\rangle\approx_{\tau,g}s_\omega^\tau\langle g\rangle$ for every $g\in G_\varphi\in\cD$.
	Consequently, $\gamma(x)\approx_\tau s_\omega^\tau$, i.\,e., $U\vDash\varphi[\gamma]$.
	
	Conversely, let $U\vDash\varphi[\gamma]$, i.\,e., $\gamma(x)\approx_\tau s_\omega^\tau$. By the definition of the second-order equality, $\gamma(x)\langle g\rangle\approx_{\tau,g} s_\omega^\tau\langle g\rangle$ for some $G\in\cD$ and every $g\in G$. Using Lemmas~\ref{lem-3-infra-prod} and~\mref{lem-4}{infra-prod} we obtain $\gamma_g(x)\approx_{\tau,g}s_{\omega g}^\tau$, i.\,e., $U_g\vDash\varphi[\gamma_g]$ for every $g\in G\in\cD$. Since $G\subset G_\varphi$, we infer that $G_\varphi\in\cD$.
	
	For the terms $q^\tau$ and $r^\tau$ of other forms the reasons are quite similar.
	
	Now consider an atomic formula $\varphi$ of the form $(q_0^{\tau_0},\ldots,q_k^{\tau_k})\epstau r^\tau$ for $\tau\equiv[\tau_0,\ldots,\tau_k]\in\Theta_b$. Assume that $q_\lambda^{\tau_\lambda}=x_\lambda^{\tau\lambda}$ and $r^\tau=u^\tau$ for some variables~$x_\lambda$ and~$u$. Then $U\vDash\varphi[\gamma]$ is equivalent to $(\gamma(x_0),\ldots,\gamma(x_k))\inn_\tau\gamma(u)$ and analogously for the pair $\zzl U_f,\gamma_f\zzr$.
	
	Let $G_\varphi\in\cD$, i.\,e., $(\gamma_g(x_0),\ldots,\gamma_g(x_k))\inn_{\tau,g}\gamma_g(u)$ for every $g\in G_{\varphi}$. Consider the elements $\xi_f\equiv(\gamma_f(x_0),\ldots,\gamma_f(x_k))$ and $p\equiv(\gamma(x_0),\ldots,\gamma(x_k))\in\check{\tau}(A)$. 
	Let $f\in F$. Then $p(f)(\mu)\equiv p(\mu)(f)=\gamma(x_\mu)(f)=\gamma_f(x_\mu)=\xi_f(\mu)$ for every $\mu\in k+1$. Consequently, $p(f)=\xi_f$. By Lemma~\mref{lem-3}{infra-prod} $\gamma_f(u)=\gamma(u)\langle f\rangle$. As a result, we obtain $p(g)\inn_{\tau,g}\gamma(x)\langle g\rangle$ for every $g\in G_\varphi\in\cD$. By definition, it means that $p\inn_\tau\gamma(x)$, i.\,e., $U\vDash\varphi[\gamma]$.
	
	Conversely, let $U\vDash\varphi[\gamma]$, i.\,e., $(\gamma(x_0),\ldots,\gamma(x_k))\inn_\tau\gamma(u)$. By the definition of the second-order belonging, for $p\equiv(\gamma(x_0),\ldots,\gamma(x_k))$ there exists $G\in\cD$ such that $p(g)\inn_{\tau,g}\gamma(u)\langle g\rangle$ for every $g\in G$. By Lemma~\mref{lem-4}{infra-prod} 
	$\gamma(u)\langle g\rangle=\gamma_g(u)$. By the previous subsection, $\xi_g=p(g)$. Consequently, $\xi_g\inn_{\tau,g}\gamma_g(u)$, i.\,e., $U_g\vDash\varphi[\gamma_g]$ for every $g\in G$. Since $G\subset G_\varphi$, we infer that $G_\varphi\in\cD$.
	
	For the terms $q_\lambda^{\tau_\lambda}$ and $r^\tau$  of other forms the reasons are quite similar.
\end{proof}

A proof of the property of infrafiltration for the quantified formula $\exists x^\tau\varphi$ for the language $L(\Sigma_2^g)$ of the generalized second-order signature $\Sigma_2^g$ is more delicate than for the first-order language. Therefore we begin it with a subsidiary proposition.

Let $\col{\zzl U_f,\gamma_f\zzr}{f\in F}$ be a collection of evaluated mathematical systems of the second-order signature $\Sigma_2^g$ with true generalized equalities and belongings. Let $\beta$ be an evaluation on the system $U\equiv\infraDprod\col{U_f}{f\in F}$.

For the evaluation $\beta$ and for every $f\in F$ define the evaluation $\delta_f$ on the system $U_f$ in the following way. Let $x$ be a variable of a type~$\tau$.
If $\tau$ is the first-order type, then put $\delta_f(x)\equiv\beta(x)(f)$.
If $\tau$  is a second-order type, then put $\delta_f(x)\equiv\beta(x)\langle f\rangle$.
Consider the evaluation $\delta\equiv\;\bowtie\!\col{\delta_f}{f\in F}$.

\begin{prop}\label{prop-4-infra-filt}
	The equalities $\delta(x^\tau)\approx_\tau\beta(x^\tau)$ hold for any variable~$x^\tau$.
\end{prop}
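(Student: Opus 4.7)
The plan is to reduce the generalized equality $\approx_\tau$ in both possible cases (first-order or second-order $\tau$) to a set-theoretic identity, and then invoke the defining feature of generalized $\Sigma_2^g$-systems from~\ref{sec-systSigma-defsystSigma}: each equality ratio $\approx_\tau$ contains the set-theoretic equality on $\tau(A)$, and likewise for every $\approx_{\tau,g}$ on $U_g$. Since the signature is second-order, every $\tau\in\Theta$ is either the first-order type $0$ or of the form $[\tau_0,\ldots,\tau_k]$, so this dichotomy is exhaustive.

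In the first-order case, the definitions of the crossing $\delta\equiv\bowtie\col{\delta_f}{f\in F}$ and of $\delta_f$ give directly $\delta(x)(f)=\delta_f(x)=\beta(x)(f)$ for every $f\in F$. Hence $\delta(x)=\beta(x)$ set-theoretically in $\tau(A)=A$; picking any $G\in\cD$ (using that $\cD$ is nonempty) and appealing to the definition of $\approx_\tau$ on the infra-$\cD$-product yields $\delta(x)\approx_\tau\beta(x)$.

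For the second-order case, with $\tau\equiv[\tau_0,\ldots,\tau_k]$, the crux is the set-theoretic identity $\delta(x)\langle g\rangle=\beta(x)\langle g\rangle$ for every $g\in F$. The inclusion $\delta(x)\langle g\rangle\subset\beta(x)\langle g\rangle$ is immediate: if $\xi=p(g)$ with $p\in\delta(x)$, then the definition of $\delta(x)$ together with $\delta_g(x)=\beta(x)\langle g\rangle$ forces $p(g)\in\beta(x)\langle g\rangle$. For the reverse inclusion, given $\xi=q(g)$ with $q\in\beta(x)$, I observe that $q$ itself witnesses $q(f)\in\beta(x)\langle f\rangle$ for every $f\in F$, so $q\in\delta(x)$ and hence $\xi=q(g)\in\delta(x)\langle g\rangle$. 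With this identity established, the containment of set-theoretic equality in each $\approx_{\tau,g}$ together with the definition of $\approx_\tau$ on the infra-$\cD$-product yields $\delta(x)\approx_\tau\beta(x)$ (any $G\in\cD$ witnesses the existential quantifier).

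The main obstacle, if any, is just this reverse inclusion in the second-order case, which hinges on the embedding $\beta(x)\subset\delta(x)$: every ``globally coherent'' tuple in $\beta(x)\subset\check\tau(A)$ is automatically ``pointwise coherent'' with respect to the slices $\beta(x)\langle f\rangle$. No appeal to the axiom of choice is needed here---unlike in the proof of Lemma~\ref{lem-3-infra-prod}---because the single element $q$ supplies a uniform witness at every coordinate.
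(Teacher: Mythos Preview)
Your proof is correct and follows the same two-case strategy as the paper: the first-order case by the direct computation $\delta(x)(f)=\delta_f(x)=\beta(x)(f)$, and the second-order case by establishing the slice identity $\delta(x)\langle g\rangle=\beta(x)\langle g\rangle$ for all $g\in F$ and then reading off $\delta(x)\approx_\tau\beta(x)$ from the definition of the infraproduct equality. The only difference is that the paper obtains this slice identity by invoking Lemma~\ref{lem-4-infra-prod} applied to the crossing $\delta$, whereas you argue it directly via the inclusion $\beta(x)\subset\delta(x)$ and correctly note that the axiom of choice is unnecessary here since any $q\in\beta(x)$ already serves as a uniform witness---a valid and slightly sharper observation.
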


\begin{proof}
	If $\tau$ is the first-order type, then by the definition of the evaluations $\delta$ and $\delta_f$ we obtain $\delta(x)(f)\equiv\delta_f(x)=\beta(x)(f)$ for any $f\in F$, i.\,e., $\delta(x)=\beta(x)$.
	
	Let $\tau$ be a second-order type. Lemma~\mref{lem-4}{infra-prod} implies 
	$\delta(x)\langle f\rangle=\delta_f(x)=\beta(x)\langle f\rangle$ for any $f\in F$. By the definition of the second-order equality, we conclude that $\delta(x)\approx_\tau \beta(x)$.
\end{proof}

\begin{prop}\label{prop-5-infra-filt}
	Let a formula $\psi$ be infrafiltrated with respect to the filter $\cD$. Then the formula $\exists x^\tau\psi$ is infrafiltrated with respect to $\cD$ as well.
\end{prop}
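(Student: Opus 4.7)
The plan is to prove the equivalence between $U\vDash(\exists x^\tau\psi)[\gm]$ and $G_{\exists x^\tau\psi}\in\cD$ by transferring witness evaluations between the infraproduct $U\eq\infraDprod\col{U_f}{f\in F}$ and the factor systems $U_f$, in both directions, using the infrafiltration hypothesis for $\psi$ at the end of each direction.

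For the direction ``$G_{\exists x^\tau\psi}\in\cD\impl U\vDash(\exists x^\tau\psi)[\gm]$'', I would, for each $g\in G_{\exists x^\tau\psi}$, invoke the axiom of choice to select an evaluation $\gm_g'$ on $U_g$ that agrees with $\gm_g$ off $x^\tau$ and witnesses $U_g\vDash\psi[\gm_g']$; for the remaining $g$ put $\gm_g'\eq\gm_g$. Let $\gm'\eq\,\bowtie\col{\gm_g'}{g\in F}$. A routine calculation, separately for first- and second-order types (using the definitions of the crossing and of the product support $A=\prod\col{A_f}{f\in F}$), shows $\gm'(y^\si)=\gm(y^\si)$ for every variable $y^\si\ne x^\tau$. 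Since $\{g:U_g\vDash\psi[\gm_g']\}\supset G_{\exists x^\tau\psi}\in\cD$, the infrafiltration of $\psi$ yields $U\vDash\psi[\gm']$, and hence $U\vDash(\exists x^\tau\psi)[\gm]$.

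For the converse direction I would start from a witness evaluation $\beta$ on $U$ that agrees with $\gm$ off $x^\tau$ and satisfies $U\vDash\psi[\beta]$. The main obstacle is that $\beta$ need not itself be a crossing of evaluations on the factors $U_f$, so the infrafiltration hypothesis for $\psi$ cannot be applied to $\beta$ directly. To circumvent this I would apply the projection construction introduced just before Proposition~\ref{prop-4-infra-filt}, producing evaluations $\de_f$ on $U_f$ from $\beta$, and forming $\de\eq\,\bowtie\col{\de_f}{f\in F}$. By Proposition~\ref{prop-4-infra-filt} we have $\de(z^\si)\approx_\si\beta(z^\si)$ for every variable $z^\si$; Lemma~\ref{lem-2-infra-prod} ensures that $U$ still has true generalized equalities and belongings, so Proposition~\ref{prop-2-systSigma-satisf} transfers satisfaction and gives $U\vDash\psi[\de]$.

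The infrafiltration hypothesis for $\psi$ applied to the crossing $\de$ now yields $\{g\in F:U_g\vDash\psi[\de_g]\}\in\cD$, and it remains to check that this set is contained in $G_{\exists x^\tau\psi}$. For any variable $y^\si\ne x^\tau$ the equality $\beta(y^\si)=\gm(y^\si)$, combined with the definition of the first-order projection (restriction to the $g$-th coordinate) and, for second-order variables, Lemma~\ref{lem-4-infra-prod}, yields $\de_g(y^\si)=\gm_g(y^\si)$. Hence $\de_g$ agrees with $\gm_g$ off $x^\tau$ and is a legitimate witness for $\exists x^\tau\psi$ on $U_g$, so every such $g$ lies in $G_{\exists x^\tau\psi}$. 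Upward closure of the filter $\cD$ then forces $G_{\exists x^\tau\psi}\in\cD$, completing the proof.
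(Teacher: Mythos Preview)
Your proposal is correct and follows essentially the same route as the paper: in the forward direction you build a crossing from chosen factor witnesses and verify it agrees with $\gamma$ off $x^\tau$, and in the converse direction you project the witness $\beta$ to the factors via the construction preceding Proposition~\ref{prop-4-infra-filt}, invoke Propositions~\ref{prop-4-infra-filt} and~\ref{prop-2-systSigma-satisf} to replace $\beta$ by the crossing $\delta$, and then push down using Lemma~\ref{lem-4-infra-prod}. Your explicit citation of Lemma~\ref{lem-2-infra-prod} before applying Proposition~\ref{prop-2-systSigma-satisf}, and your use of the inclusion $\{g:U_g\vDash\psi[\delta_g]\}\subset G_{\exists x^\tau\psi}$ together with upward closure (rather than an equality claim), are in fact slightly cleaner than the paper's presentation.
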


\begin{proof}
	Denote the formula $\exists x^\tau\psi$ by $\varphi$. Let $G_\varphi\in\cD$, i.\,e., $U_g\vDash\varphi[\gamma_g]$ for every 
	$g\in G_\varphi\in\cD$. Further, we shall write simply~$G$ instead of~$G_\varphi$.
	
	The presented satisfaction property means that $U_g\vDash\psi[\gamma_g']$ for some evaluation $\gamma_g'$ such that $\gamma_g'(y)=\gamma_g(y)$ for every $y^\sigma\ne x^\tau$. For every $f\in F$ define the evaluation $\delta_f$ setting $\delta_f\equiv \gamma_f$ if $f\in F\setminus G$ and $\delta_f\equiv\gamma_f'$ if $f\in G$. 
	
	Check that the evaluated systems $\zzl U_f,\delta_f\zzr$ and $\zzl U_g,\delta_g\zzr$ are $H$-concordant for every $f,g\in F$. If $f,g\in F\setminus G$, then $\delta_f=\gamma_f$ and $\delta_g=\gamma_g$. Since the evaluations $\gamma_f$ and $\gamma_g$ are $H$-concordant, our assertion is true. Let $f,g\in G$. Then $\delta_f=\gamma_f'$ and $\delta_g=\gamma_g'$. Let $x$ be a variable of a type $\tau$.
	
	Consider the evaluation $\delta\equiv\;\bowtie\!\col{\delta_f}{f\in F}$.
	Check that $\delta(y)=\gamma(y)$ for every $y^\sigma\ne x^\tau$. 
	
	Let~$\sigma$ be the first-order type. Then $\delta(y)(g)=\delta_g(y)=\gamma_g'(y)=\gamma_g(y)=\gamma(y)(g)$ for $g\in G$. If $f\in F\setminus G$, then $\delta(y)(f)=\delta_f(y)=\gamma_f(y)=\gamma(y)(f)$. Consequently, $\delta(y)=\gamma(y)$.
	
	Let $\sigma$ be a second-order type. If $f\in G$, then $\delta_f(y)=\gamma_f'(y)=\gamma_f(y)$. If $f\in F\setminus G$, then $\delta_f(y)=\gamma_f(y)$. Let $p\in\delta(y)$. By the definition of the crossing, $p(f)\in\delta_f(y)$ for every $f\in F$. By the above, $p(f)\in\gamma_f(y)$ for every $f\in F$. This means that $p\in\gamma(y)$, whence $\delta(y)\subset\gamma(y)$. The inverse inclusion is checked in the same way. Consequently, $\delta(y)=\gamma(y)$.
	
	Thus, for every $y\ne x$ we have $\delta(y)=\gamma(y)$.
	
	By condition and construction, $U_g\vDash\psi[\delta_g]$ for every $g\in G\in\cD$. Since the formula $\psi$ is infrafiltrated, the  obtained property implies the property $U\vDash\psi[\delta]$. Since $\delta(y^\sigma)=\gamma(y^\sigma)$ for every $y^\sigma\ne x^\tau$, we obtain the property $U\vDash\varphi[\gamma]$.
	
	Conversely, let $U\vDash\varphi[\gamma]$. It is equivalent to $U\vDash\psi[\beta]$ for some evaluation $\beta$,  $H$-concordant with the evaluation $\gamma$ and such that $\beta(y)=\gamma(y)$ for every $y^\sigma\ne x^\tau$.
	
	Consider the evaluation $\delta\equiv\;\bowtie\!\col{\delta_f}{f\in F}$ from Proposition~\ref{prop-4-infra-filt}, corresponding to the evaluation $\beta$. According to Proposition~\ref{prop-4-infra-filt}, $\delta(z^\rho)\approx_\rho\beta(z^\rho)$ for every variable~$z^\rho$. It follows from Proposition~\mref{prop-2}{systSigma-satisf} that the property $U\vDash\psi[\beta]$ is equivalent to the property $U\vDash\psi[\delta]$. Since the formula $\psi$ is infrafiltrated, the property $U\vDash\psi[\delta]$ is equivalent to the property $G\equiv\{g\in F\mid U_g\vDash\psi[\delta_g]\}\in\cD$.
	
	Let $y^\sigma\ne x^\tau$. If $\sigma$ is the first-order type, then $\delta_g(y)=\beta(y)(g)=\gamma(y)(g)=\gamma_g(y)$.
	If~$\sigma$ is a second-order type, then $\delta_g(y)=\beta(y)\langle g\rangle=\gamma(y)\langle g\rangle$. Since by Lemma~\mref{lem-4}{infra-prod} $\gamma(y)\langle g\rangle=\gamma_g(y)$, we have $\delta_g(y)=\gamma_g(y)$. Consequently, in all the cases $\delta_g(y)=\gamma_g(y)$ for every $y^\sigma\ne x^\tau$. Therefore the property $U_g\vDash\psi[\delta_g]$ is equivalent to the property $U_g\vDash\varphi[\gamma_g]$. Thus, $\{g\in F\mid U_g\vDash\varphi[\gamma_g]\}=G\in\cD$.
	This implies $G_\varphi\in\cD$.
\end{proof}

The following two lemmas are the same as ones for the first-order language.

\begin{lem}\label{lem-6-infra-filt}
	Let formulas $\psi$ and $\xi$ be infrafiltrated with respect to the filter $\cD$. Then the formula $\psi\land\xi$ is infrafiltrated with respect to $\cD$ as well.
\end{lem}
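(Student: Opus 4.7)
The plan is to reduce the statement directly to the filter axioms using the propositional clause for $\land$ in the definition of satisfaction. First I would denote $\gamma\equiv\,\bowtie\!\col{\gamma_f}{f\in F}$ and $U\equiv\infraDprod\col{U_f}{f\in F}$, and set $\varphi\equiv\psi\land\xi$. The only nontrivial piece of data is the set $G_\varphi=\{f\in F\mid U_f\vDash\varphi[\gamma_f]\}$, so the opening move is to observe that by clause~5 of the definition of satisfiability in Section~\ref{sec-systSigma-eval} applied pointwise,
\[
 G_\varphi \;=\; G_\psi\cap G_\xi.
\]

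Next I would prove the two directions of the infrafiltration equivalence. For the forward direction, assume $U\vDash\varphi[\gamma]$. By the same clause~5 applied on $U$, this means $U\vDash\psi[\gamma]$ and $U\vDash\xi[\gamma]$. The infrafiltration hypotheses for $\psi$ and $\xi$ then yield $G_\psi\in\cD$ and $G_\xi\in\cD$, whence by property~1 of a filter $G_\psi\cap G_\xi\in\cD$; using the identity above, $G_\varphi\in\cD$.

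For the converse, assume $G_\varphi\in\cD$. Since $G_\varphi\subset G_\psi$ and $G_\varphi\subset G_\xi$, property~2 of a filter (upward closure) gives $G_\psi\in\cD$ and $G_\xi\in\cD$. The infrafiltration hypotheses for $\psi$ and $\xi$ then yield $U\vDash\psi[\gamma]$ and $U\vDash\xi[\gamma]$, and clause~5 on $U$ delivers $U\vDash\varphi[\gamma]$.

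There is no real obstacle here: the argument never touches generalized equalities, belongings, or the structure of the crossing $\bowtie$; it only needs the set-theoretic identity $G_{\psi\land\xi}=G_\psi\cap G_\xi$ and the two defining closure properties of a filter. The lemma is genuinely the ``easy'' inductive step, in contrast with Proposition~\ref{prop-5-infra-filt} whose treatment of $\exists$ required Proposition~\ref{prop-4-infra-filt} and Proposition~\mref{prop-2}{systSigma-satisf}.
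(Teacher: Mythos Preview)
Your proof is correct and follows essentially the same route as the paper's: both arguments unwind the $\land$ clause of satisfaction, reduce to $G_\psi,G_\xi\in\cD$, and use the two filter axioms (closure under intersection and upward closure). Your presentation is slightly tidier in that you isolate the identity $G_{\psi\land\xi}=G_\psi\cap G_\xi$ up front, whereas the paper leaves the upward-closure step implicit in the first direction; but there is no substantive difference.
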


\begin{proof}
	Denote the formula $\psi\land\xi$ by $\varphi$. Let $G_\varphi\in\cal D$, i.\,e., $U_g\vDash\varphi[\gamma_g]$ for all
	$g\in G_\varphi\in\cD$. This property is equivalent to the conjunction of the properties $U_g\vDash\psi[\gamma_g]$ and $U_g\vDash\xi[\gamma_g]$. Since these formulas are infrafiltrated, it is equivalent to the conjunction of the properties $U\vDash\psi[\gamma]$ and $U\vDash\xi[\gamma]$, but it is equivalent to the property $U\vDash\varphi[\gamma]$.
	
	Conversely, let $U\vDash\varphi[\gamma]$. It is equivalent to the conjunction of the properties $U\vDash\psi[\gamma]$ and $U\vDash\xi[\gamma]$. Then $G_\psi\in\cD$ and $G_\xi\in\cD$. Consider $G\equiv G_\psi\cap G_\xi$. Then $U_g\vDash\psi[\gamma_g]$ and $U_g\vDash\xi[\gamma_g]$ implies $U_g\vDash\varphi[\gamma_g]$ for every $g\in G\in\cD$. Hence, $G_\varphi\in\cD$.
\end{proof}

\begin{lem}\label{lem-7-infra-filt}
	Let a formula $\psi$ be infrafiltrated with respect to the ultrafilter $\cD$. Then the formula $\lnot\psi$ is infrafiltrated with respect to $\cD$ as well.
\end{lem}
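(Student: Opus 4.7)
The plan is to reduce the statement to the binary partition property of the ultrafilter $\cD$. Set $\vphi\eq\lnot\psi$, write $U\eq\infraDprod\col{U_f}{f\in F}$ and $\gm\eq\bowtie\col{\gm_f}{f\in F}$, and unfold the definition of satisfaction: $U\vDash\vphi[\gm]$ holds iff $U\vDash\psi[\gm]$ fails. The infrafiltration hypothesis on $\psi$ converts this to the condition $G_\psi\notin\cD$. On the other hand, since $U_g\vDash\lnot\psi[\gm_g]$ is equivalent to $U_g\vDash\psi[\gm_g]$ failing, one immediately has $G_\vphi=F\setminus G_\psi$, so the target condition $G_\vphi\in\cD$ becomes $F\setminus G_\psi\in\cD$. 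The whole lemma thus reduces to the equivalence $G_\psi\notin\cD\iff F\setminus G_\psi\in\cD$.

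For this last equivalence I would invoke both defining properties of an ultrafilter. The pair $\zzl G_\psi,F\setminus G_\psi\zzr$ is a binary partition of $F$; the binary partition property supplies the forward direction $G_\psi\notin\cD\Rightarrow F\setminus G_\psi\in\cD$. Conversely, if both $G_\psi$ and $F\setminus G_\psi$ belonged to~$\cD$, the intersection axiom of filters would give $\vrn=G_\psi\cap(F\setminus G_\psi)\in\cD$, contradicting the properness of the ultrafilter; this yields $F\setminus G_\psi\in\cD\Rightarrow G_\psi\notin\cD$. Chaining the equivalences delivers $U\vDash\vphi[\gm]\iff G_\vphi\in\cD$, which is the definition of infrafiltration for $\vphi$.

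There is no real obstacle: the substantive work has already been done in Lemmas~\ref{lem-5-infra-filt} and~\ref{lem-6-infra-filt} and Proposition~\ref{prop-5-infra-filt}. What the negation case illustrates is precisely where ultrafilters, rather than mere filters, are indispensable: the earlier cases used only the intersection and upward-closure axioms of~$\cD$, whereas negation forces the maximality expressed by the binary partition property, mirroring the familiar role of the ultrafilter assumption in the first-order ultraproduct theorem.
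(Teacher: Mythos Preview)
Your proof is correct and follows essentially the same approach as the paper: both reduce to the identity $G_{\lnot\psi}=F\setminus G_\psi$ and then invoke the ultrafilter (binary partition) property to obtain the equivalence $G_\psi\notin\cD\iff F\setminus G_\psi\in\cD$. Your presentation packages the two directions as a single equivalence about ultrafilters, whereas the paper treats the forward and converse implications separately, but the content is the same.
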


\begin{proof}
	Denote the formula $\lnot\psi$ by $\varphi$. By assumption, the properties $G_\psi\in\cD$ and $U\vDash\psi[\gamma]$ are equivalent.
	
	By definition, $F\setminus G_\varphi=\{g\in F\mid \text{ the property } U_g\vDash\varphi[\gamma_g]\text{ does not hold}\}$. But $U_g\vDash\varphi[\gamma_g]$ is equivalent to the assertion that the property $U_g\vDash\psi[\gamma_g]$ does not hold. Consequently the property $U_g\vDash\psi[\gamma_g]$ is equivalent to the assertion that the property $U_g\vDash\varphi[\gamma_g]$ does not hold. It implies $F\setminus G_\varphi=G_\psi$.
	
	Let $G_\varphi\in\cal D$. Since $\cD$ is an ultrafilter, we have $G_\psi=F\setminus G_\varphi\notin\cal D$. So the property $U\vDash\psi[\gamma]$ does not hold. By the definition of the satisfiability, it means that $U\vDash\varphi[\gamma]$.
	
	Conversely, let $U\vDash\varphi[\gamma]$. Then the property $U\vDash\psi[\gamma]$ does not hold. Therefore $G_\psi\notin\cD$. Since $\cD$ is an ultrafilter, we have $G_\varphi=F\setminus G_\psi\in\cD$.
\end{proof}

\begin{theo}[the generalized infrafiltration theorem]\label{theo-1-infra-filt}
	Every formula $\varphi$ of the language $L(\Sigma^g_2)$ of the second-order signature $\Sigma^g_2$ with generalized equalities and belongings is infrafiltrated with respect to any ultrafilter~$\cD$ on the set~$F$.
\end{theo}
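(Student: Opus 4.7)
The plan is to reduce the theorem to the normalized fragment of the language, where formulas are built from atomic ones using only $\lnot$, $\land$, and $\exists$, and then close an induction on complexity by appealing to the lemmas and propositions already established.

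First I would invoke Proposition~\ref{prop-1-systSigma-satisf}: for any formula $\vphi$ of $L(\Sigma^g_2)$ and any mathematical $\Sigma^g_2$-system $V$ with evaluation $\beta$, one has $V\vDash\vphi[\beta]\iimpl V\vDash\vphi^*[\beta]$, where $\vphi^*$ is obtained by the rewriting from~\ref{sec-systSigma-satisf} and uses only $\lnot$, $\land$, $\exists$. Applied to the infraproduct $U\eq\infraDprod\col{U_f}{f\in F}$ with the crossing evaluation $\gm\eq\bowtie\col{\gm_f}{f\in F}$, and to each factor $U_f$ with $\gm_f$, this turns the desired equivalence $U\vDash\vphi[\gm]\iimpl\{g\in F\mid U_g\vDash\vphi[\gm_g]\}\in\cD$ into the same statement for $\vphi^*$. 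Hence it suffices to prove the theorem for formulas in the normal fragment.

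Next I would run a complete induction on the number $n$ of logical symbols ($\lnot$, $\land$, $\exists$) appearing in a normalized formula $\vphi^*$, proving the assertion $B(n)$: every such formula with at most $n$ logical symbols is infrafiltrated with respect to $\cD$. The base case $n=0$ is exactly Lemma~\ref{lem-5-infra-filt}, since then $\vphi^*$ is atomic and the conclusion holds for any filter, in particular for the given ultrafilter. For the inductive step, $\vphi^*$ has one of the three forms $\psi\land\xi$, $\lnot\psi$, or $\exists x^\tau\psi$ with $\psi,\xi$ normalized and containing strictly fewer logical symbols, hence infrafiltrated by the induction hypothesis; then Lemma~\ref{lem-6-infra-filt}, Lemma~\ref{lem-7-infra-filt}, and Proposition~\ref{prop-5-infra-filt}, respectively, conclude that $\vphi^*$ is infrafiltrated as well. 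Combining the induction with the initial reduction yields the theorem for an arbitrary $\vphi\in L(\Sigma^g_2)$.

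The step I expect to carry the real weight of the theorem, and the only one where the ultrafilter hypothesis is genuinely used, is the negation case routed through Lemma~\ref{lem-7-infra-filt}: the equality $F\razn G_\vphi=G_\psi$ forces one to decide membership of $G_\psi$ in $\cD$ from membership of its complement, and this is precisely the binary partition property characterizing ultrafilters among filters. For $\land$ and $\exists$ an arbitrary filter suffices, and for atomic formulas the point is the already-proved compatibility of the infraproduct's generalized equalities and belongings with the ``pointwise'' data (Lemmas~\ref{lem-3-infra-prod} and~\ref{lem-4-infra-prod}) together with Proposition~\ref{prop-2-systSigma-satisf} used in Proposition~\ref{prop-5-infra-filt} to replace $\beta$ by its crossing $\delta$ of restrictions. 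No additional calculation is required beyond assembling these pieces.
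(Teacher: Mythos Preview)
Your proposal is correct and follows essentially the same route as the paper: reduce via Proposition~\ref{prop-1-systSigma-satisf} to the $\{\lnot,\land,\exists\}$-fragment, then run complete induction on the number of logical symbols using Lemma~\ref{lem-5-infra-filt} for the base case and Lemma~\ref{lem-6-infra-filt}, Lemma~\ref{lem-7-infra-filt}, Proposition~\ref{prop-5-infra-filt} for the step. The only cosmetic difference is that the paper performs the induction first and the normalization reduction afterward, whereas you reverse the order; your additional remark pinpointing Lemma~\ref{lem-7-infra-filt} as the sole place the ultrafilter hypothesis enters is accurate.
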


\begin{proof}
	The set of all formulas $\varphi$ of the language $L(\Sigma^g_2)$, constructed by induction from atomic formulas by means of the connectives $\lnot$ and $\land$ and the quantifier $\exists$, will be denoted by $\Psi$. The subset of the set $\Psi$, consisting of all formulas containing at most $n$ logical symbols $\lnot$, $\land$, and $\exists$, will be denoted by~$\Psi_n$. Obviously, $\Psi=\add{\Psi_n}{n\in\omega_0}$.
	
	Using the complete induction principle we shall prove the following assertion $A(n)$: \emph{every formula $\varphi\in\Psi_n$ is infrafiltrated.}
	
	If $n=0$, then $\varphi$ is an atomic formula.  By Lemma~\mref{lem-5}{infra-filt}, it is infrafiltrated. Consequently, $A(0)$ holds.
	
	Assume that for every $m<n$ the assertion $A(m)$ holds.
	Let $\varphi\in\Psi_n$. If $\varphi=\lnot\psi$, then $\psi\in\Psi_{n-1}$. Therefore, $\psi$ is infrafiltrated. By Lemma~\ref{lem-7-infra-filt}, the formula $\varphi$ is infrafiltrated as well. If $\varphi=\psi\land\xi$, then $\psi,\xi\in\Psi_{n-1}$. Therefore, by the inductive assumption, the formulas $\psi$ and $\xi$ are infrafiltrated. By Lemma~\ref{lem-6-infra-filt}, the formula $\varphi$ is infrafiltrated as well.
	Finally, if $\varphi=\exists x^\tau\psi$, then $\psi\in\Psi_{n-1}$. Consequently, as above, the formula $\psi$ is infrafiltrated. By Proposition~\ref{prop-5-infra-filt} the formula $\varphi$ is infrafiltrated as well.	Thus, the assertion $A(n)$ holds.
	
	By the complete induction principle the assertion $A(n)$ holds for every $n\in\omega_0$. This means that any formula $\varphi\in\Psi$ is infrafiltrated.
	
	Let $\varphi$ be an arbitrary formula of the language $L(\Sigma^g_2)$. Consider for $\varphi$ the accompanying formula $\varphi^*$ defined in~\ref{sec-systSigma-satisf}. By the definition of the operation $\varphi\mapsto \varphi^*$, we have $\varphi^*\in\Psi$. By the proven above, the formula $\varphi^*$ is infrafiltrated, i.\,e., $\{g\in F\mid U_g\vDash \varphi^*[\gamma_g]\}\in{\cal D} \Leftrightarrow U\vDash\varphi^*[\gamma]$.
	Proposition~\mref{prop-1}{systSigma-satisf} implies the equivalences $U\vDash\varphi^*[\gamma]\Leftrightarrow U\vDash \varphi[\gamma]$ and  $U_g\vDash\varphi^*[\gamma_g]\Leftrightarrow U_g\vDash \varphi[\gamma_g]$.
	As a result we get the following chain of equivalences:
\begin{multline*}
\bigl\{g\in F\mid U_g\vDash\varphi[\gamma_g]\bigr\}\in\cD\Leftrightarrow\bigl\{g\in F\mid U_g\vDash \varphi^*[\gamma_g]\bigr\}\in\cD \Leftrightarrow\\
\Leftrightarrow  U\vDash\varphi^*[\gamma]\Leftrightarrow U\vDash\varphi[\gamma].
\end{multline*}
 It means that the formula $\varphi$ is infrafiltrated.
\end{proof}

This theorem has one important corollary. Let $\Phi$ be some set of formulas of the language $L(\Sigma_2^g)$ of the generalized second-order signature~$\Sigma_2^g$. Let the set $\Phi$ has a model $\zzl U_0,\gamma_0\zzr$ of the signature~$\Sigma_2^g$ with true generalized equalities and belongings. Take an arbitrary set~$F$ and an arbitrary ultrafilter $\cD$ on~$F$. Consider the collection of the models $\col{\zzl U_f,\gamma_f\zzr}{f\in F}$ such that $\zzl U_f,\gamma_f\zzr\equiv\zzl U_0,\gamma_0\zzr$. The infra-$\cD$-product $\infraDprod\col{U_f}{f\in F}$ of the collection $\col{U_f}{f\in F}$ will be called 
the \emph{infra-$\cD$-power of the system $U_0$ with the exponent~$F$} and will be denoted by $\infraDpower(U_0,F)$.
The crossing $\bowtie\col{\gamma_f}{f\in F}$ of the collection $\col{\gamma_f}{f\in F}$
will be called the \emph{crossing of the evaluation $\gamma_0$ in the quantity $F$} and will be denoted by $\bowtie\zzl\gamma_0,F\zzr$.

\begin{cor}\label{cor-1-theo-1-infra-filt}
	Let $\Phi$ be some set of formulas of the language $L(\Sigma_2^g)$. If the set~$\Phi$ has a model $\zzl U_0,\gamma_0\zzr$ of the signature~$\Sigma_2^g$ with true generalized equalities and belongings, then for every set~$F$ and every ultrafilter~$\cD$ on~$F$ the set~$\Phi$ has also the model $\zzl\infraDpower\zzl U_0,F\zzr,\bowtie\zzl\gamma_0,F\zzr\zzr$ of the signature $\Sigma_2^g$ with true generalized equalities and belongings.
\end{cor}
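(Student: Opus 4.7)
The plan is to apply the generalized infrafiltration theorem (Theorem~\ref{theo-1-infra-filt}) to the constant collection of evaluated systems, together with Lemma~\ref{lem-2-infra-prod} to secure the property of true generalized equalities and belongings in the resulting infra-$\cD$-power.

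First I would form the collection $\col{\zzl U_f,\gamma_f\zzr}{f\in F}$ by setting $\zzl U_f,\gamma_f\zzr\eq\zzl U_0,\gamma_0\zzr$ for every $f\in F$. By hypothesis, each $\zzl U_f,\gamma_f\zzr$ has true generalized equalities and belongings, so Lemma~\ref{lem-2-infra-prod} applies to the infra-$\cD$-product $\infraDprod\col{U_f}{f\in F}\eq\infraDpower\zzl U_0,F\zzr$ equipped with the crossing $\bowtie\col{\gamma_f}{f\in F}\eq\;\bowtie\!\zzl\gamma_0,F\zzr$, yielding that equality axioms E1--E4 are satisfied. Thus the resulting evaluated system has true generalized equalities and belongings, so the infrafiltration theorem is applicable to it.

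Next, fix an arbitrary $\vphi\in\Phi$. Since $\zzl U_0,\gamma_0\zzr$ is a model for $\Phi$, we have $U_0\vDash\vphi[\gamma_0]$, and hence $U_f\vDash\vphi[\gamma_f]$ for every $f\in F$ by the choice of the collection. Consequently $G_\vphi\eq\{g\in F\mid U_g\vDash\vphi[\gamma_g]\}=F$, and since every filter (in particular any ultrafilter) on $F$ contains $F$, we obtain $G_\vphi\in\cD$. Theorem~\ref{theo-1-infra-filt} then immediately gives
$$\infraDpower\zzl U_0,F\zzr\vDash\vphi[\bowtie\zzl\gamma_0,F\zzr].$$
As $\vphi\in\Phi$ was arbitrary, $\zzl\infraDpower\zzl U_0,F\zzr,\bowtie\zzl\gamma_0,F\zzr\zzr$ is a model for $\Phi$.

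There is no real obstacle here: the corollary is essentially a direct application of Theorem~\ref{theo-1-infra-filt} to a constant family. The only points requiring mild care are (i) checking that Lemma~\ref{lem-2-infra-prod} really transfers the property of true generalized equalities and belongings from $\zzl U_0,\gamma_0\zzr$ to the infra-$\cD$-power (so that Theorem~\ref{theo-1-infra-filt} is legitimately applicable), and (ii) observing that $F\in\cD$ holds for every ultrafilter on $F$, which makes the trivial index set $G_\vphi=F$ admissible in the equivalence provided by the theorem.
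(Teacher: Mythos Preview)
Your proof is correct and follows exactly the approach the paper intends: the paragraph preceding the corollary already sets up the constant collection $\zzl U_f,\gamma_f\zzr\equiv\zzl U_0,\gamma_0\zzr$, and the corollary is then stated without further proof, so your argument (Lemma~\ref{lem-2-infra-prod} for E1--E4 on the product, then $G_\vphi=F\in\cD$ and Theorem~\ref{theo-1-infra-filt}) supplies precisely the omitted details. One small expository point: the applicability of Theorem~\ref{theo-1-infra-filt} hinges on the \emph{factors} $U_f$ having true generalized equalities and belongings, not on the product; Lemma~\ref{lem-2-infra-prod} is needed only to justify the clause ``with true generalized equalities and belongings'' in the corollary's conclusion.
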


\subsection{Compactness theorem for formulas of the language $L(\Sigma^g_2)$}\label{sec-infra-comp}

In the capacity of some pleasant complementary corollary to the infrafiltration theorem we deduce the generalized compactness theorem for the language~$L(\Sigma_2^g)$. It is well-known that it does not hold for the standard language~$L(\Sigma_2^{st})$ \cite[Appendix]{Mendelson1997}.

\begin{theo}\label{theo-2-infra-comp}
	Let $\Phi$ and $\Psi$ be some sets of formulas of the language $L(\Sigma^g_2)$ of the generalized second-order signature~$\Sigma_2^g$. Let for every finite subset~$f$ of the set~$\Phi$ the set of formulas $f+$\emph{(E1--E4)}$+\Psi$ has a model $\zzl U_f,\gamma_f\zzr$ of the signature~$\Sigma^g_2$. Then the set of formulas $\Phi+$\emph{(E1--E4)}$+\Psi$ has a model $\zzl U,\gamma\zzr$ of the signature~$\Sigma^g_2$.
\end{theo}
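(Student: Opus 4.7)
The plan is to imitate the classical Łoś-style compactness proof but substitute the infrafiltration Theorem~\ref{theo-1-infra-filt} for Łoś's lemma. Concretely, let $F$ denote the set of all finite subsets of~$\Phi$. For each $f\in F$ the hypothesis furnishes an evaluated system $\zzl U_f,\gamma_f\zzr$ that models $f+$(E1--E4)$+\Psi$; in particular every $\zzl U_f,\gamma_f\zzr$ has true generalized equalities and belongings, so the infrafiltration machinery is applicable to the collection $\scol{\zzl U_f,\gamma_f\zzr}{f\in F}$.

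For each $\vphi\in\Phi$ put $F_{\vphi}\eq\{f\in F\mid\vphi\in f\}$ and for each $\psi\in\Psi$ put $F_{\psi}\eq F$. The family $\cF\eq\{F_{\vphi}\mid\vphi\in\Phi\}\cup\{F_{\psi}\mid\psi\in\Psi\}\cup\{F\}$ has the finite intersection property: given $\vphi_1,\ldots,\vphi_n\in\Phi$, the finite set $h\eq\{\vphi_1,\ldots,\vphi_n\}$ lies in $F_{\vphi_1}\cap\cdots\cap F_{\vphi_n}$, and the sets $F_{\psi}$ and $F$ contribute nothing new. By Zorn's lemma the filter generated by~$\cF$ extends to an ultrafilter~$\cD$ on~$F$, and $\cD$ is proper because every $F_{\vphi}$ is non-empty.

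Now form the infra-$\cD$-product $U\eq\infraDprod\col{U_f}{f\in F}$ with evaluation $\gm\eq\;\bowtie\col{\gamma_f}{f\in F}$. By Lemma~\ref{lem-2-infra-prod} the pair $\zzl U,\gm\zzr$ satisfies the equality axioms E1--E4, i.e.\ $U$ has true generalized equalities and belongings, which is what is needed to invoke Theorem~\ref{theo-1-infra-filt}. For any $\vphi\in\Phi$, whenever $g\in F_{\vphi}$ the set $g$ contains~$\vphi$, so $U_g\vDash\vphi[\gm_g]$ by the choice of $\zzl U_g,\gm_g\zzr$; hence
$$
 \{g\in F\mid U_g\vDash\vphi[\gm_g]\}\supset F_{\vphi}\in\cD,
$$
and the infrafiltration theorem gives $U\vDash\vphi[\gm]$. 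The same argument with $F_{\psi}=F\in\cD$ yields $U\vDash\psi[\gm]$ for every $\psi\in\Psi$. Thus $\zzl U,\gm\zzr$ is the desired model for $\Phi+$(E1--E4)$+\Psi$.

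The only non-routine point is verifying that the infrafiltration theorem really suffices, i.e.\ that the ambient E1--E4 hypothesis needed to apply Theorem~\ref{theo-1-infra-filt} survives the passage to the infraproduct; this is handled precisely by Lemma~\ref{lem-2-infra-prod}, which is why (E1--E4) has to be explicitly bundled into the finitely-satisfiable families in the statement. Everything else is a cosmetic rearrangement of the standard ultraproduct proof of compactness, with the crucial difference that the generalized semantics permits Theorem~\ref{theo-1-infra-filt} to hold for \emph{second-order} formulas\,---\,whereas the analogous statement fails in the standard second-order semantics, blocking the classical compactness argument there.
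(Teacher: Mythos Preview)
Your proof is correct and follows essentially the same route as the paper: index by finite subsets of~$\Phi$, extend the cone sets $F_{\vphi}=\{f\in F\mid\vphi\in f\}$ (the paper's $F_{\{\vphi\}}$) to an ultrafilter, form the infra-$\cD$-product, cite Lemma~\ref{lem-2-infra-prod} for E1--E4, and apply Theorem~\ref{theo-1-infra-filt} to each $\vphi\in\Phi$ and each $\psi\in\Psi$. The only cosmetic differences are that the paper restricts to \emph{non-empty} finite subsets and builds the filter base from the more general sets $F_f=\{g\in F\mid f\subset g\}$, but these do not affect the argument.
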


\begin{proof}
	Consider the set $F\equiv\{f\subset\Phi\mid 0<|f|<\omega\}$ of all finite non-empty subsets from~$\Phi$.
	
	For an element $f\in F$ consider the set $F_f\equiv\{g\in F\mid f\subset g\}$. Since $f\in F_f$, we have $F_f\ne\varnothing$. The ensemble $\mathfrak{C}\equiv\{F_f\mid f\in F\}$ has the finite intersection property, i.\,e., it is multiplicative. Hence, there is some ultrafilter $\cD$ on the set $F$ including the set ${\mathfrak C}$.
	
	Consider the system $U\equiv\infraDprod\col{U_f}{f\in F}$ and the evaluation $\gamma\equiv\;\bowtie\!\col{\gamma_f}{f\in F}$ on the system~$U$ constructed in~\ref{sec-infra-prod}. By Lemma~\mref{lem-2}{infra-prod}, $U$ is a system with the true generalized equalities and belongings.
	
	Prove that the evaluated system $\zzl U,\gamma\zzr$ is a model for the set~$\Phi$.
	
	Suppose $\varphi\in\Phi$. Consider the set $F_{\{\varphi\}}$. By condition, $U_{\{\varphi\}}\vDash\varphi[\gamma_{\{\varphi\}}]$.
	Consider the set $G_\varphi\equiv\{g\in F\mid U_g\vDash\varphi[\gamma_g]\}$. 
	If $g\in F_{\{\varphi\}}$, then $\{\varphi\}\subset g$ implies $\varphi\in g$. Therefore $U_g\vDash\varphi[\gamma_g]$. Consequently, $F_{\{\varphi\}}\subset G_\varphi$. Since $F_{\{\varphi\}}\in{\cal D}$, we have $G_\varphi\in{\cal D}$.
	
	By Theorem~\mref{theo-1}{infra-filt} we infer the property $U\vDash\varphi[\gamma]$. Thus, $\zzl U,\gamma\zzr$ is a model for the set $\Phi$. The fact that $\zzl U,\gamma\zzr$ is a model for the set $\Psi$ follows immediately from Theorem~\mref{theo-1}{infra-filt}.
\end{proof}

\section{Inductive sequence of models of non-canonical generalized second-order Dedekind real axes with exponentially increasing powers}

\subsection{The formulation of Final theorem}\label{sec-main-formul}

\begin{theofinal}\label{the-final-throrem}\mbox{}
\begin{enumerate}
\item[\textup{(I)}]
	Let $F$ be a fixed set. Then there exist some sequence $\col{\hR_i}{i\in\omega_0}$ of sets $\hR_i$, some sequence $\col{\hS_i}{i\in\omega_0}$ of superstructures~$\hS_i$ of the signature~$\Sigma^g_{R2}$ over the sets~$\hR_i$, and some sequence $\col{u_i}{i\in\omega_0}$ of mappings $u_i:\hR_i\to\hR_{i+1}$ such that:
	\begin{enumerate}
		\item[$(1)$] 
			$\hRe_0\eq\zzl\hR_0,\hS_0\zzr\eq\zzl\Rbb,S_{R2}\zzr$;
		\item[$(2)$] 
			every system $\hRe_i\eq\zzl\hR_i,\hS_i\zzr$ of the signature~$\Sigma^g_{R2}$ is a model for the theory~$Th_{R2}^g$;
		\item[$(3)$] 
			every mapping $u_i$ is an $(\approx_{\pi,i},\approx_{\pi,i+1})$-injective homomorphism of the signature~$\Sigma^g_{R2}$ from the system~$\hRe_i$ into the system~$\hRe_{i+1}$;
		\item[$(4)$]
			the image of the system~$\hRe_i$ in the system~$\hRe_{i+1}$ respectively to the homomorphism~$u_i$ is a submodel of the model~$\hRe_{i+1}$;
        \item[$(5)$]
			the support $\hR_{i+1}$ of the system $\hRe_{i+1}$ is the set $\hR_i^F$;
		\item[$(6)$]
			$(u_i p)(f)=p$ for every $p\in\hR_i$ and every $f\in F$, i.\,e., $u_i p$ is the $\{p\}$-valued function on~$F$.
	\end{enumerate}
\item[\textup{(II)}]
	There exists some superstructure $\hS_{\om_0}$ of the signature $\Sigma_{R2}^g$ over the set $\hR_{\om_0}\eq\prod\col{\hR_i}{i\in\om_0}$ and some sequence of mappings $w_i:\hR_i\to\hR_{\om_0}$ such that:
	\begin{enumerate}
		\item[$(1)$]
			the system $\hRe_{\om_0}\eq\zzl\hR_{\om_0},\hS_{\om_0}\zzr$ of the signature $\Sigma_{R2}^g$ is a model for the theory~$Th_{R2}^g$;
		\item[$(2)$] 
			every mapping $w_i$ is an $(\approx_{\pi,i},\approx_{\pi,\om_0})$-injective homomorphism of the signature~$\Sigma^g_{R2}$ from the system~$\hRe_i$ into the system~$\hRe_{\om_0}$;
		\item[$(3)$]
			the image of the system~$\hRe_i$ in the system~$\hRe_{\om_0}$ respectively to the homomorphism~$w_i$ is a submodel of the model~$\hRe_{\om_0}$;
		\item[$(4)$]
			$w_i=w_{i+1}\circ u_i$ for every $i\in\om_0$.
	\end{enumerate}
\end{enumerate}
\end{theofinal}

\subsection{Detailed superstructures in Final theorem}\mbox{}

Here we give the detailed description of the superstructures $S_i$ from Final theorem in the same manner as it is given for the superstructure $S_{R2}$ in~\ref{sec-Ded-axes}.

The superstructure $S_i$ is the quadruple $\zzl S_{c,i}, S_{e,i}, S_{b,i}, S_{v,i}\zzr$, where:
\begin{itemize}
\item 
the \emph{collection of constant structures} $S_{c,i}$ is the suit
$$
 \big((s_0^{\pi,i},s_1^{\pi,i}),\vrn,(s_0^{\rho,i},s_1^{\rho,i},s_2^{\rho,i}),(s_0^{\lm,i},s_1^{\lm,i})\big)=
 \big(0_{i},1_{i}),\vrn,(-_i,/_i,\le_i),(+_i,\cdot_i)\big);
$$
\item 
the \emph{collection of the equality ratios} $S_{e,i}$ is the suit
$
 \zzl\approx_{\pi,i},\approx_{\kap,i},\approx_{\rho,i},\approx_{\lm,i}\zzr;
$
\item 
the \emph{collection of the belonging ratios} $S_{b,i}$ is the suit
$
 \zzl\inn_{\kap,i},\inn_{\rho,i},\inn_{\lm,i}\zzr;
$
\item 
the \emph{collection of the terminals $S_{v,i}$ over the set $\hR_i$} is the suit
$$
 \zzl\pi(\hR_i),\kap(\hR_i),\rho(\hR_i),\lm(\hR_i)\zzr=\zzl\hR_i,\cP(\hR_i),\cP(\hR_i^2),\cP(\hR_i^3)\zzr.
$$
\end{itemize}

\subsection{The proof of Final theorem}\mbox{}

(I)
The construction of the infra-$\cD$-power of the system $U_0$ with the exponent~$F$ from~\ref{sec-infra-filt} gives the opportunity to prove part~I of the Final theorem.

Fix some ultrafilter $\cD$ on $F$. We shall construct the necessary sequence of models by natural induction. Take for the initial model $\hRe_0\eq\zzl\hR_0,\hS_0\zzr$ the canonical model $R_2^g\eq\zzl\Rbb,S_{R2}\zzr$ from~\ref{sec-Ded-axes}.
Assume that the model $\hRe_i\eq\zzl\hR_i,\hS_i\zzr$ with some evaluation~$\zeta_i$ is constructed.

Take the system $\hRe_{i+1}\eq\zzl\hR_{i+1},\hS_{i+1}\zzr\eq\infraDpower(\hRe_{i},F)$ and the evaluation~$\zeta_{i+1}\eq\;\bowtie\!(\zeta_i,F)$ defined in~\ref{sec-infra-prod}. According to Corollary to Theorem~\mref{theo-1}{infra-filt} the evaluated system $\zzl\hRe_{i+1},\zeta_{i+1}\zzr$ is a model for the theory~$Th^g_{R2}$. And the support~$\hR_{i+1}$ of this model is the set $\hR_i^F\eq\prod\col{\hR_{if}}{f\in F}$, where $\hR_{if}\eq\hR_i$ for every $f\in F$. 
Since the set~$\Psi_2^g$ of axioms of the theory~$Th_{R2}^g$ from~\ref{sec-Ded-axiom} consists of closed formulas only, the system~$\hRe_{i+1}$ is a model for this theory.

Define the mapping $u_i:\hR_i\to\hR_{i+1}$ setting $(u_i(p))(f)\eq p$ for every $p\in\hR_i$ and every $f\in F$. Check that $u_i$ is $(\approx_{\pi,i},\approx_{\pi,i+1})$-injective.
Take some $p,q\in\hR_i$ and suppose that $u_i(p)\approx_{\pi,i+1} u_i(q)$. By the construction from~\ref{sec-infra-prod} there exists $G\in\cD$ such that $(u_i(p))(g)\approx_{\pi,g,i}(u_i(q))(g)$ for every $g\in G$. Since $G\ne\vrn$ we can take $g_0\in G$. Then $(u_i(p))(g_0)\eq p$ and $(u_i(q))(g_0)\eq q$ implies $p\approx_{\pi,i} q$.

The construction of constant structures presented in~\ref{sec-infra-prod} implies immediately that~$u_i$ is a homomorphism of the signature~$\Sigma^g_{R2}$ from the system~$\hRe_i$ into the system~$\hRe_{i+1}$.

\medskip

(II)
The construction of the infra-{$\cD$}-product of the collection of mathematical systems from~\ref{sec-infra-prod} gives the opportunity to prove part~II of the Final theorem. 
Fix some ultrafilter~$\cE$ on~$\om_0$. Take the system $\hRe_{\om_0}\eq\zzl\hR_{\om_0},\hS_{\om_0}\zzr\eq\infraEprod\col{\hRe_i}{i\in\om_0}$ and the evaluation $\zeta_{\om_0}\eq\;\bowtie\!\col{\zeta_i}{i\in\om_0}$ defined in~\ref{sec-infra-prod}.
According to part~I and Theorem~\mref{theo-1}{infra-filt} the evaluated system $\zzl\hRe_{\om_0},\zeta_{\om_0}\zzr$ is a model for the theory~$Th_{R2}$. Since the set~$\Psi_2^g$ of axioms of this theory from~\ref{sec-Ded-axiom} consists of closed formulas only, the system~$\hRe_{\om_0}$ is a model for this theory.

Fix $i\in\om_0$. Construct some mapping $w_i:\hR_i\to\hR_{\om_0}$ by the inverse and direct natural inductions. 
For the base of direct induction put $(w_i p)(i)\eq p$ and $(w_i p)(i+1)\eq u_i p$. 
For the step of direct induction put $(w_i p)(j+1)\eq u_j((w_i p)(j))$ for $j\ge i+1$.
Fix some $f_0\in F$. Put $(w_i p)(i-1)\eq p(f_0)$ for the base of inverse induction. 
For the step of inverse induction put $(w_i p)(j-1)\eq ((w_i p)(j))(f_0)$ for $1\le j\le i-1$.
These constructions can be described in a more rigorous form based on \cite[Theorem 1(1.2.8)]{ZakhRodi2018SFM1}.

By the natural induction in ST it can be proved that $w_i$ is a homomorphism of the signature~$\Sigma^g_{R2}$ from the system~$\hRe_i$ into the system~$\hRe_{\om_0}$ (see the example of scrupulous arguments below).

Check that $w_i$ is $(\approx_{\pi,i},\approx_{\pi,\om_0})$-injective. Take some $p,q\in\hR_i$ and suppose that $w_i(p)\approx_{\pi,\om_0} w_i(q)$. 
By the construction from~\ref{sec-infra-prod} there exists $J\in\cE$ such that $(w_i p)(j)\approx_{\pi,j} (w_i q)(j)$ for every $j\in J$. Consider the binary partition $\zzl i,\om_0\razn i\zzr$ of~$\om_0$. Since~$\cE$ is a ultrafilter, we infer that either $i\in\cE$ or $\om_0\razn i\in\cE$. If $i\in\cE$, then $\om_0\in\cE$ but it is not so. Hence, $\om_0\razn i\in\cE$. This implies $J\cap(\om_0\razn i)\in\cE$, and, therefore, there is $j\in J$ such that $j\ge i$. Take $k_0\eq j-i$.

If $j=i$, then by the definition of $w_i$ we have $(w_i p)(i)=p$ and $(w_i q)(i)=q$. Hence, $p\approx_{\pi,i} q$. If $j=i+1$, then $(w_i p)(i+1)\eq u_i p$ and $(w_i q)(i+1)\eq u_i q$ imply $(u_i p)(i+1)\approx_{\pi,i+1}(u_i q)(i+1)$. Since by assertion~3 of part~I the mapping~$u_i$ is $(\approx_{\pi,i},\approx_{\pi,i+1})$-injective, we infer that $p\approx_{\pi,i} q$.
Consider in ST the set $K\eq\{k\in\Nbb\mid ((w_i p)(i+k)\approx_{\pi,i+k}(w_i q)(i+k))\impl p\approx_{\pi,i} q\}$. Let~$\Phi_{ST}$ be a totality of axioms of the theory ST, i.\,e., $\Phi_{ST}$ consists of all explicit proper axioms of this theory, all implicit proper axioms of this theory, and all implicit logical axioms of the predicate calculus (see, e.\,g., \cite[1.1.3--1.1.11 and A.1.2]{ZakhRodi2018SFM1}). Denote the first formula in the definition of~$K$ by~$\vphi(i+k)$ and the second one by~$\psi$. We have proved in ST the existence of deduction $\Phi_{ST},\vphi(i+1)\vdash\psi$. Since ST is the first-order theory, we conclude that $\Phi_{ST}\vdash\vphi(i+1)\impl\psi$ by virtue of the deduction theorem (see, for example, \cite[Proposition 2.5]{Mendelson1997} or \cite[1.1.3]{ZakhRodi2018SFM1}). Hence, $1\in K$.

Suppose that $k\in K$ and $(w_i p)(i+k+1)\approx_{\pi,i+k+1}(w_i q)(i+k+1)$.
By the definition of~$w_i$ we have $(w_i p)(i+k+1)\eq u_{i+k}((w_i p)(i+k))$ and the same for~$q$. Since by assertion~3 of part~I the mapping~$u_{i+k}$ is $(\approx_{\pi,i+k},\approx_{\pi,i+k+1})$-injective, we infer that 
$(w_i p)(i+k)\approx_{\pi,i+k}(w_i q)(i+k)$. Now from $k\in K$ we deduce that $p\approx_{\pi,i}q$. Thus, we have proved the existence of deduction $\Phi_{ST},\vphi(i+k+1)\vdash\psi$. As above this implies $\Phi_{ST}\vdash\vphi(i+k+1)\impl\psi$, and, therefore, $k+1\in K$. By the principle of natural induction in ST (see \cite[1.2.6]{ZakhRodi2018SFM1}) we get $K=\Nbb$.

This means that for our $j=i+k_0$ we have $k_0\in K$. Since $j\in J$, we conclude that $p\approx_{\pi,i}q$. This proves assertion~2.

Now we must only prove assertion 4. Fix $p\in\hR_i$. Then $(u_i p)(f)\approx p$ for every $f\in F$. By the definition we have $(w_{i+1}(u_i p))(i+1)\approx u_i p\approx (w_i p)(i+1)$. For the base of direct induction we have 
$$
 (w_{i+1}(u_i p))(i+2)\approx u_{i+1}((w_{i+1}(u_i p))(i+1))\approx u_{i+1}((w_i p)(i+1))\approx(w_i p)(i+2).
$$
For the base of inverse induction we have 
$$
 (w_{i+1}(u_i p))(i)\approx (u_i p)(f_0)\approx p\approx (w_i p)(i).
$$
Then by the direct and inverse inductions we check that $(w_{i+1}(u_i p))(j)\approx_{\pi,j}(w_i p)(j)$ for every $j\in\om_0$. Hence, $(w_{i+1}\circ u_i)(p)\approx_{\pi,\om_0}w_i(p)$ for every $p\in\hR_i$.
\qed
\bigskip
	
\begin{remark}\label{A3-A19-remark}
	Since every set $\hR_i\eq\hR_{i-1}^F$ for $i\ge 1$ consists of ``real''-valued functions $p:F\to\hR_{i-1}$, it is necessary to clarify directly the satisfaction of non-evident axioms~A3 (the existence and functionality of the inversion) and~A19 (the linearity of the order) on the systems~$\hRe_i$.
	
	In case of A3 take any function $p\in\hR_i$ such that $p\not\approx_{\pi,i}0_i$, where $0_i$ denotes the null in~$\hR_i$. 
	Consider the binary partition of~$F$ consisting of the sets $\zer(p)\eq\{f\in F\mid p(f)\approx_{\pi,f,i-1}0_{i-1}\}$ and $\coz(p)\eq F\razn\zer(p)$.
	Since~$\cD$ has the binary partition property, we have either $\zer(p)\in\cD$ or $\coz(p)\in\cD$. In the first case we conclude that $p\approx_{\pi,i} 0_i$ but it contradicts our assumption. Hence, $\coz(p)\in\cD$ and $p(g)\not\approx_{\pi,i-1}0_{i-1}$ for every $g\in\coz(p)$. By~A3 for every $g\in\coz(p)$ there exists $p(g)^{-1}$ such that $(p(g),p(g)^{-1})\inn_{\rho,i-1}/_{i-1}$. Define $p^{-1}$ setting $p^{-1}(g)\eq p(g)^{-1}$ for every $g\in\coz(p)$ and $p^{-1}(f)\eq p(f)$ for every $f\in\zer(p)$. 
	By the definition of $\rho$-belonging~$\inn_{\rho,i}$ from~\ref{sec-infra-prod} $(p,p^{-1})\inn_{\rho,i}/_{i}$. Thus, we deduced the existence of the inversion in~$\hRe_i$ from the existence of the inversion in~$\hRe_{i-1}$ using the binary partition property of the ultrafilter~$\cD$.
	
	In case of A19 take any functions $p,q\in\hR_{i}$. Since $\hR_{i-1}$ is linearly ordered with respect to the order~$\le_{i-1}$, we can take the binary partition of~$F$ consisting of the sets 
\begin{align*}
G&\eq\{g\in F\mid (p(g),q(g))\inn_{\rho,i-1}\le_{i-1}\} \text{ and }\\
H'&\eq\{h\in F\mid ((q(h),p(h))\inn_{\rho,i-1}\le_{i-1})\wedge(q(h)\not\approx_{\pi,i-1}p(h))\}.
\end{align*}
 By binary partition property of~$\cD$ we have either $G\in\cD$ or $H'\in\cD$. In the first case we conclude that $(p,q)\inn_{\rho,i}\le_{i}$.
	In the second case we can see that $H'\subset H\eq\{h\in F\mid (q(h),p(h))\inn_{\rho,i-1}\le_{i-1}\}\in\cD$ implies  $(q,p)\inn_{\rho,i}\le_{i}$.
	Thus, we deduced the linearity of the order in~$\hRe_i$ from the linearity of the order in~$\hRe_{i-1}$ using again the binary partition property of the ultrafilter~$\cD$.
\end{remark}

\begin{OpQu}
Part II of Final theorem shows that the model $\hRe_{\om_0}$ can be considered as some pretender for the \emph{inductive limit of the inductive sequence $s\eq\col{\hRe_i}{i\in\om_0}$} in the sense of \cite[11.8]{Semadeni1971}. 
But this is an open question.
\end{OpQu}

\begin{OpQu}
[\emph{about transfinite extension of the inductive sequence~$s$}]
Let $\lm$ be an ordinal number such that $\lm>\om_0$ and~$\cE$ be an ultrafilter on~$\lm$. Since~$\cE$ has the binary partition property, we can consider the ultrafilters $\cE_{\al}\eq\{E\subset\al\mid\exists\,G\in\cE\ (E=G\cap\al)\}$ for every ordinal number $\al\in\lm\razn\om_0=[\om_0,\lm[$. Starting from $\hRe_{\om_0}$ we can construct by the transfinite procedure some collection $t\eq\col{\hRe_{\al}}{\al\in\lm\razn\om_0}$ of models for the theory $Th^g_{R2}$ such that:
1) $\hRe_{\al}\eq\infraEalprod\col{\hRe_{\gm}}{\gm\in\al\razn\om_0}$ for limit ordinal number~$\al$ and
2) $\hRe_{\al+1}\eq\infraEalpower(\hRe_{\al},F)$. \emph{Is the collection~$t$ is inductive with respect to some injective homomorphisms $u_{\al\beta}:\hR_{\al}\to\hR_{\beta}$ for every $\al<\beta$ and does~$t$ extend~$s$?}
\end{OpQu}

\begin{Suppl}
In \cite[C.3.4]{ZakhRodi2018SFM1} the \emph{generalized second-order Peano\,--\,Landau theory $Th^g_{N2}$ of natural numbers} is considered. 
It is clear that some inductive sequence $\col{N_i}{i\in\om_0}$ of models of this theory can be constructed, which is similar to the inductive sequence $s\eq\col{\hRe_i}{i\in\om_0}$ constructed above. And also the inductive ``quasilimit'' $N_{\om_0}$ of this sequence can be constructed similarly to to the inductive ``quasilimit'' $\hRe_{\om_0}$.
Moreover, its own Final theorem can be proved for the generalized models $N_{i}$ and $N_{\om_0}$ of the theory $Th^g_{N2}$. Besides, open questions~1 and~2 are valid for these hypothetical models.
\end{Suppl}

\bibliographystyle{elsarticle-num} 
\bibliography{ZaRoDedekind}

\end{document}